\newtheorem*{lemma*}{Lemma}
\newtheorem{lemma}[subsection]{Lemma}
\newtheorem*{theorem*}{Theorem}
\newtheorem{theorem}[subsection]{Theorem}
\newtheorem*{proposition*}{Proposition}
\newtheorem{proposition}[subsection]{Proposition}
\newtheorem*{corollary*}{Corollary}
\newtheorem{corollary}[subsection]{Corollary}
\theoremstyle{definition}
\newtheorem*{definition*}{Definition}
\newtheorem{definition}[subsection]{Definition}
\newtheorem*{example*}{Example}
\newtheorem{example}[subsection]{Example}
\theoremstyle{remark}
\newtheorem*{remark*}{Remark}
\newtheorem{remark}[subsection]{Remark}
\DeclareMathOperator{\SL}{SL}
\DeclareMathOperator{\GL}{GL}
\DeclareMathOperator{\Sym}{Sym}
\DeclareMathOperator{\Hom}{Hom}
\DeclareMathOperator{\id}{id}
\DeclareMathOperator{\im}{im}
\newcommand{\CC}{\mathbb C}
\newcommand{\NN}{\mathbb N}
\newcommand{\QQ}{\mathbb Q}
\newcommand{\PP}{\mathbb P}
\newcommand{\ZZ}{\mathbb Z}
\newcommand{\pd}[2]{\dfrac{\partial#1}{\partial#2}}
\newcommand{\OOO}{\mathcal O}
\newcommand{\SSS}{\mathcal S}
\newcommand{\BBB}{\mathcal B}
\newcommand{\Sk}{\SSS_{k}}
\newcommand{\Sn}{\SSS_{n}}
\newcommand{\name}[1]{\textsc{#1\/}}
\newcommand{\Cplus}{\mathbb C^{+}}
\newcommand{\Cst}{\mathbb C^{*}}
\renewcommand{\phi}{\varphi}
\newcommand{\be}{\begin{enumerate}}
\newcommand{\ee}{\end{enumerate}}
\newcommand{\ps}{\par\smallskip}
\newcommand{\lex}{\ell_{\text{\it exp}}}
\newcommand{\lmon}{\ell_{\text{\it mon}}}
\newcommand{\Cblam}{\CC[\bar\lambda_{1},\ldots,\bar\lambda_{n-1}]}
\newcommand{\br}{\boldsymbol{r}}
\newcommand{\bh}{\boldsymbol{h}}
\newcommand{\bk}{\boldsymbol{k}}
\newcommand{\bn}{\boldsymbol{n}}
\newcommand{\blam}{\bar\lambda}
\newcommand{\bae}{\bar e}
\newcommand{\E}{\boldsymbol{E}}
\newcommand{\D}{\boldsymbol{D}}
\newcommand{\simto}{\xrightarrow{\sim}}
\begin{document}

\title{Perpetuants: a lost treasure}
\author{Hanspeter Kraft and Claudio Procesi}
\date{version 11 from 12.11.2019}
\address{Departement Mathematik und Informatik,
Universit\"at Basel,  Spiegelgasse~1, CH-4051 Basel}
\email{hanspeter.kraft@unibas.ch}
\address{Dipartimento di Matematica, G. Castelnuovo,
Universit\`a di Roma La Sapienza, piazzale A. Moro,  00185,
Roma, Italia}
\email{procesi@mat.uniroma1.it}

\maketitle 
\hfill{\it\small To the memory of  Gian-Carlo Rota}
\vskip1cm

 \begin{abstract} The purpose of this paper is 
to discuss the classical, and forgotten, notion of {\em perpetuants}, see Definition \ref{Perp}, and in particular to exhibit a basis of these elements  in Theorem~\ref{basis-perp}, thus closing an old line of investigation started by \name{J. J. Sylvester} in 1882.

In order to do this we also give  a proof of the classical Theorem of \name{Stroh} computing their dimensions.
\end{abstract}

\section*{Introduction}
{\em Perpetuant} (see Definition~\ref{Perp}) is one of the several concepts invented by \name{J. J. Sylvester} in his investigations of {\em covariants} for binary forms. 

One of the main goals of classical invariant theorists  was to exhibit a {\em minimal set of generators or ``Groundforms''}  for the rings of invariants under consideration, in particular for covariants of binary forms. This proved soon to be a formidable task achieved only for forms of degree up to 6.  
Perpetuants  are  strictly connected to the quest of a minimal set of generators for  a {\em limit algebra $S$ of covariants},  defined below  in Formula~\ref{limit}.    
 
The simplest description of $S$, but not very instructive,  is as the subalgebra of the polynomial ring  $R=\CC[a_0,a_1,a_2,\ldots]$  in the infinitely many variables $a_i$, $i=0,\ldots,\infty$   which is the kernel of the derivation $\D = \sum_{i=1}^\infty a_{i-1}\frac{\partial}{\partial a_i}$.

To the best of our knowledge  such an explicit description was not achieved. With our method we shall in fact exhibit such a minimal set of generators  which we call a {\em basis of perpetuants}. This is our main new result, Theorem~\ref{basis-perp}.
\ps
The term {\em perpetuant}  appears in one of the first issues of the American Journal of Mathematics \cite{Sy1882On-Subvariants-i.e} which \name{Sylvester} had founded a few years before.  A name  which will hardly appear in a mathematical paper of the last 70 years due to the complex history of invariant theory which was at some time declared dead only to resurrect several decades later. 

We learned of this word from \name{Gian-Carlo Rota}  who pronounced it with an enigmatic smile. In fact, in  \cite{KuRo1984The-invariant-theo} he laments that  {\em ``This area is in a particularly sorry state.''}
\ps
 
We were surprised to find an entry in Wikipedia where one finds useful information, but the wrong paper of \name{Stroh} is quoted.

In this entry it is mentioned that {\it\name{MacMahon} conjectured and \name{Stroh} proved the following result:}

\begin{theorem}[\cite{St1890Ueber-die-symbolis}]\label{main}
The dimension of the space of perpetuants of degree $n>2$ and weight $g$ is the coefficient of $x^g$  in
$$
{\frac {x^{2^{n-1}-1}}{(1-x^{2})(1-x^{3})\cdots (1-x^{n})}}.
$$
For $n=1$ there is just one perpetuant, of weight 0, and for $n=2$ the number is given by the coefficient of $x^g$ in $x^2/(1-x^2)$. 
\end{theorem}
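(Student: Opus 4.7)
I would attack the theorem by computing two different bivariate Hilbert series of $S$ and comparing them.

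First, determine $H_n(x):=\sum_{g}\dim S_{n,g}\,x^g$. The $(n,g)$-piece of $R=\CC[a_0,a_1,\ldots]$ is spanned by the weakly increasing monomials $a_{i_1}\cdots a_{i_n}$ with $\sum i_j=g$, so has dimension $p(n,g)$, the number of partitions of $g$ into at most $n$ parts. Since $\D$ preserves degree and lowers weight by one, and a standard $\mathfrak{sl}_2$-argument (treating the $a_i$ as a lowest-weight Verma module, with $\D$ playing the role of the lowering operator $F$) shows it is surjective on each $R_{n,g}$, one gets $\dim S_{n,g}=p(n,g)-p(n,g-1)$, whence
$$H_n(x)=\frac{1}{(1-x^2)(1-x^3)\cdots(1-x^n)}\qquad(n\ge 1).$$

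Second, invoke the classical fact that $S$ is a polynomial algebra on the perpetuants: lifting any basis of $S_+/(S_+)^2$ to elements of $S$ yields an algebraically independent generating set, because each syzygy among covariants of a fixed binary $d$-form dies in the stable limit $d\to\infty$. Writing $p_{n,g}=\dim\mathrm{Perp}_{n,g}$ and $P_n(x)=\sum_g p_{n,g}x^g$, this translates into
$$\sum_{n\ge 0}H_n(x)\,t^n=\prod_{n\ge 1,\,g\ge 0}(1-t^nx^g)^{-p_{n,g}}.$$
Taking logarithms and inverting by Möbius recovers each $P_n(x)$ from the $H_m(x^k)$.

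The hard part is then recognising the closed form $x^{2^{n-1}-1}/[(1-x^2)\cdots(1-x^n)]$ inside this inversion: the shift by $2^{n-1}-1$ and the reappearance of the same denominator as $H_n$ do not fall out of a raw manipulation of the logarithmic identity. For this I would aim to prove Theorem~\ref{basis-perp} first, exhibiting an explicit basis of $\mathrm{Perp}_n$ of the form $M\cdot\mu_n$, where $\mu_n$ is a distinguished perpetuant of minimal weight $2^{n-1}-1$ and $M$ runs over monomials in formal symbols of weights $2,3,\ldots,n$. The dimension formula then reads off immediately, and the polynomial-algebra structure of $S$ on the perpetuants can be verified a posteriori by matching Hilbert series against the already-known $H_n(x)$. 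The exceptional case $n=2$, where the minimal perpetuant is the Hessian $a_0a_2-a_1^2$ of weight $2$ rather than $2^{2-1}-1=1$, must be treated separately.
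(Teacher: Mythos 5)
Your first step is sound and even gives a correct alternative derivation of $\dim S_{n,g}$: the paper obtains $\sum_g \dim S_{n,g}\,x^g = 1/[(1-x^2)\cdots(1-x^n)]$ from the explicit dual basis $U_{k_2,\ldots,k_n}$ of Proposition~\ref{ssee}, whereas your $\mathfrak{sl}_2$-surjectivity argument for $\D$ gives the same count. The problem is everything after that. The assertion that $S$ is a polynomial algebra on (lifts of) the perpetuants is not a classical fact you may invoke: it is neither proved nor needed in the paper, and your one-line justification (``each syzygy dies in the stable limit'') is not an argument --- the relations $a_0^2D=8c_2^3+9c_3^2$ in $S(3)$ and $6a_0^2c_2B+a_0^3C=8c_2^3+9c_3^2$ in $S(4)$ show that syzygies persist at every finite level and are only resolved by old generators becoming decomposable, which is precisely the phenomenon one must control. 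Worse, even granting freeness, you concede that the plethystic inversion does not visibly produce the shift $x^{2^{n-1}-1}$ and propose to obtain it from Theorem~\ref{basis-perp}; but that theorem is deduced from the dimension count (via Theorem~\ref{str} and Corollary~\ref{iPeP}), not an independent input, so the plan is circular. Nothing in your outline produces the number $2^{n-1}-1$, nor a candidate minimal perpetuant $\mu_n$, nor a proof that a proposed family $M\cdot\mu_n$ is linearly independent modulo $I^2$ and spans $I/I^2$: each of these requires knowing $\dim(I^2)_{n,g}$, which is the actual content of the theorem.

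The missing mechanism --- the heart of Stroh's proof as presented in the paper --- is the duality implemented by the potenziante: $\bar\pi_{n,g}$ identifies $S_{n,g}$ with the dual of $\bar\Sigma_{n,g}$, the symmetric functions of degree $g$ in $\bar\lambda_1,\ldots,\bar\lambda_n$ with $\sum_i\bar\lambda_i=0$, and under this duality the subspace $\sum_j S_{h,j}S_{n-h,g-j}$ of products is the orthogonal of the multiples of $p_h=\prod_{|T|=h}\bar\lambda_T$ (Lemma~\ref{decomp}). Since the $p_h$ are distinct irreducibles in the polynomial ring $\CC[\bar e_2,\ldots,\bar e_n]$, the full space of decomposables is the orthogonal of the principal ideal $(q_n)$ with $q_n=p_1\cdots p_{\lfloor n/2\rfloor}$; the weight shift is exactly $\deg q_n=2^{n-1}-1$, and the denominator reappears because $(q_n)\cap\bar\Sigma_{n,g}\cong\bar\Sigma_{n,\,g-\deg q_n}$. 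Without a substitute for this computation of the decomposables your outline cannot be completed. (Your $n=2$ remark is correct as far as it goes, but your final ``a posteriori'' Hilbert-series verification of freeness is also circular: matching bigraded series only proves freeness once the number of generators in each bidegree is already known, which is the statement to be proved.)
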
  
 
In order to prove our main Theorem~\ref{basis-perp} we need first to review in modern language \name{Stroh}'s proof which is quite remarkable and in a way already {\em very modern}, see Theorem~\ref{str}. The basic new idea here is to understand \name{Stroh}'s mysterious ``Potenziante'' as a dualizing tensor.
\smallskip

For a history of these  ideas and the contributions of \name{Cayley} and \name{Hammond} we refer to \name{MacMahon}  \cite{Ma1894The-Perpetuant-Inv}. More about perpetuants can be found in \cite{Ma1884On-Perpetuants,Ma1885A-Second-Paper-on-,Ma1885Memoir-on-Seminvar,Gr1903On-Perpetuants,Gr1903Types-of-Perpetuan,Wo1904On-the-Irreducibil,Wo1905Alternative-Expres,YoWo1905Perpetuant-Syzygie,Wo1907On-the-Reducibilit,Yo1924Ternary-Perpetuant,Gi1927The-Minimum-Weight}.
\ps
\subsection*{Organization of the paper}  The paper is divided into four sections.
\ps
Section 1 establishes the basic notation and recalls some standard techniques from classical invariant theory.
\ps
Sections 2 and 3 form the bulk of the paper. In section 2 we give an explicit basis of $S$ and a proof of \name{Stroh}'s Theorem. In section 3 we prove the Main Theorem giving a basis of the perpetuants.
\ps
Finally, section 4 is an appendix, explaining the role of this material in the classical theory of binary forms. We also explain a direct approah which can be used to calculate a minimal set of generating covariants.
 
\ps
\section{Back to \texorpdfstring{$19^{th}$}{19th} century}
\subsection{Semi-invariants and covariants\label{seeco}}  
One has to start with the classical notion of {\em semi-invariant}. The name is probably due to \name{Cayley} (see \cite{Sy1882On-Subvariants-i.e}, cf. \cite{Fa1876Theorie-des-Formes}), but today, with this name, we understand a different notion, so that we will use the term {\it $U$-invariant}. 

Consider the $n+1$-dimensional vector space  $P_{n}=P_n(x)\subset \CC[x]$ of  polynomials of degree $\leq n$ in the variable $x$.  

On this acts the additive group $\Cplus$ by 
\[
p(x)\mapsto p(x-\lambda)\text{ for } \lambda \in \Cplus \text{ and }p(x)\in P_{n}.
\]
As usual this action extends to an action of $\Cplus$ as automorphisms of the algebra $\OOO(P_{n})$ of polynomial functions on $P_n$.
\begin{definition}\label{semi}
The {\it algebra $S(n)$ of  $U$-invariants} of polynomials of degree $n$ is the subalgebra of the algebra of polynomial functions on $P_n$ which are invariant under the action of the group $\Cplus$:
$$
S(n) := \OOO(P_{n})^{\Cplus}.
$$
\end{definition}

The symbol $U$ is justified since, as we shall see,  the space $P_n$ can be identified with the space of {\em binary forms}, that is homogeneous polynomials of degree $n$ in two variables, over which acts the group $\SL(2,  \CC)$. 

The action of $\Cplus$ should be understood as the action of the unipotent subgroup $U$ of $\SL(2,  \CC)$,
$$
U:=\Big\{ \begin{bmatrix} 1&a\\0&1 \end{bmatrix} \mid a\in  \CC\Big\}.
$$  

\begin{remark}\label{cov}  
For the invariant theorists  of the $19^{th}$ century $S(n)$ is an {\em avatar}  of covariants of binary forms of degree $n$, a basic tool to compute invariants.  
We will explain later what this means.
\end{remark}

The operator of derivative $\frac{d}{dx}$  maps  $P_n$ surjectively  to $P_{n-1}$  commuting with the actions of 
$\Cplus$.

This induces an inclusion of duals  $P_{n-1}^*\subset P_n^*\subset P_{n+1}^*\ldots$, hence an inclusion of the rings $\OOO(P_{n})$ of polynomial functions on $P_{n}$, and finally an inclusion $S(n)\subset S(n+1)$ of $U$-invariants. We thus obtain a limit ring
\begin{equation}\label{limit}
S=\bigcup_{n=0}^\infty S(n), \text{ \ the algebra of $U$-invariants.}
\end{equation}
 
In order to have a more concrete description of $S$ 
one  needs to keep the same coordinates for the duals.  
It is then necessary  to write a polynomial $p(x)$ as a sum of {\em divided powers}, setting
\begin{equation}\label{ddp}
x^{[i]}:=\frac{x^i}{i!},\quad p(x):=\sum_{j=0}^n a_j x^{[n-j]}=\sum_{j=0}^n b_j x^{ n-j },\quad b_j:=\frac {a_j}{(n-j)!}. 
\end{equation}
Then  the operator of derivative $\dfrac d{dx}$  acts as $\dfrac d{dx}x^{[i]}=x^{[i-1]}$, hence
$$
\frac d{dx} p(x) = \frac d{dx}\sum_{j=0}^n a_j x^{[n-j]}=\sum_{j=0}^{n-1} a_j x^{[n-1-j]}.
$$ 
It follows that the coordinates $a_0,a_1,\ldots ,a_{n-1}$  which are a basis of  $P_{n-1}^*$ are mapped to the same coordinates in  $P_{n}^*$.

Therefore,  the algebra $R=\mathbb C[a_0,\ldots,a_n,\ldots]$ of polynomials  in the infinitely many variables $a_i$, $i=0,\ldots,\infty$, is the union of the algebras $R_n=\OOO(P_{n})$ of   polynomials  on the spaces $P_n$,  and  the algebra $S=R^U$ is the ring of invariants of this infinite polynomial algebra under the action of $U=\Cplus$.

A  basic feature of divided powers is that,  in the {\em binomial formula}, the binomial coefficients disappear: 
\begin{equation}\label{disb}
(a+b)^{[i]}=\frac{(a+b)^i}{i!}=\sum_{j=0}^i \frac 1{i!} \binom ij a^{i-j}b^j= \sum_{j=0}^i  a^{[i-j]}b^{[j]}.
\end{equation}
We  thus get, for $\lambda\in\Cplus$,
$$
\lambda\cdot x^{[i]}:=\frac {(x-\lambda)^i}{i!}=\sum_{j=0}^i (-1)^j  \lambda^{[j]}x^{[i-j]},
$$
and so
\begin{equation}\label{azio}
\begin{split}
\lambda\cdot f(x):=\lambda\cdot \sum_{j=0}^n a_j x^{[n-j]}
&= \sum_{j=0}^n a_j  \sum_{h=0}^{n-j} (-1)^h \lambda^{[h]} x^{[n-j-h]}\\
&= \sum_{k=0}^n   \left(\sum_{j+h=k}  (-1)^h a_j \lambda^{[h]}\right)  x^{[n-k]}.
\end{split}
\end{equation}
By duality the action of $\lambda$ on the coefficients is that $a_k$ is transformed into the $k^{th}$ coefficient of $(-\lambda)\cdot p(x)$,  that is 
\begin{equation}\label{azio1}
\lambda\cdot a_k = \sum_{j+h=k}   a_j \lambda^{[h]} =\sum_{j =0}^k  a_j \lambda^{[k-j]},
\end{equation}
for instance 
\begin{gather*}
\lambda\cdot a_{0} = a_0,\ \  \lambda\cdot a_{1} = a_0\lambda +a_1,\ \  
\lambda\cdot a_{2} = a_0\lambda^{[2]} +a_1\lambda +a_2,\\
\lambda\cdot a_{3} = a_0\lambda^{[3]}+a_1\lambda^{[2] }+a_2\lambda+a_3,\ldots.
\end{gather*}
We thus see in an explicit way that the dual action on polynomials in $a_0,a_1,\ldots$ is defined in a way independent of $n$. 

The map $\lambda\mapsto \lambda\cdot f(a_0,a_1,\ldots)=f(\lambda\cdot a_0,\lambda\cdot a_1,\ldots) $ is an additive 1-parameter group of  automorphisms with infinitesimal generator given by
\begin{equation*}
\begin{split}
\frac d{d\lambda}f(\lambda\cdot a_0,\lambda\cdot a_1,\ldots)|_{\lambda=0} &=
\sum_i\pd{}{a_i} f(a_0,a_1,\ldots) \frac d{d\lambda} ( \lambda\cdot  a_i) |_{\lambda=0}\\
&=  \sum_{i=1}^\infty a_{i-1}\pd{}{a_i} f(a_0,a_1,\ldots),
\end{split}
\end{equation*}
because  $\dfrac d{d\lambda} ( \lambda\cdot  a_i) |_{\lambda=0}=a_{i-1}$.
One deduces the next theorem which--we believe--is due to \name{Cayley}:
\begin{theorem}\label{diff} 
The algebra $S$ of $U$-invariants is formed by the polynomials $f$ in the infinitely many variables 
$a_0,a_1,a_{2},\ldots$, satisfying
$$
\D f:= \sum_{i=1}^\infty a_{i-1}\pd{}{a_i} f(a_0,a_1,a_{2}, \ldots)=0,\quad \D = \sum_{i=1}^\infty a_{i-1}\pd{}{a_i} .
$$
\end{theorem}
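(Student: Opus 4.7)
The plan is to identify the $\Cplus$-action on $R$ with the exponential of the locally nilpotent derivation $\D$; the theorem then reduces to the standard fact that the fixed points of $\exp(\lambda\D)$ coincide with $\ker\D$. One direction is already implicit in the computation preceding the statement: if $f$ is $U$-invariant, the function $\lambda\mapsto\lambda\cdot f$ is constantly equal to $f$, so its derivative at $\lambda=0$, which was just shown to equal $\D f$, must vanish.

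For the converse I plan to verify the operator identity $\lambda\cdot g = \exp(\lambda \D)\,g$ on all of $R$. Since both sides are algebra automorphisms in $g$ for each fixed $\lambda$ (the left by definition, the right because $\D$ is a derivation), it suffices to check the identity on the generators $a_k$. Iterating the relation $\D a_k = a_{k-1}$ yields $\D^h a_k = a_{k-h}$ for $h\le k$ and $\D^h a_k = 0$ for $h>k$, so
$$
\exp(\lambda \D)\,a_k \;=\; \sum_{h=0}^{k} \lambda^{[h]}\, a_{k-h},
$$
which coincides termwise with formula~\eqref{azio1}.

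Once the identity $\lambda\cdot = \exp(\lambda\D)$ is established, the converse is immediate: if $\D f = 0$, then $\D^h f = 0$ for every $h\ge 1$, whence $\lambda\cdot f = \exp(\lambda\D)\,f = f$ for all $\lambda\in\Cplus$, so $f\in S$. The only delicate point is that the series for $\exp(\lambda\D)$ is a priori infinite. This is not a genuine obstacle, since any $f\in R$ involves only finitely many variables $a_0,\ldots,a_N$, and $\D$ is nilpotent of index at most $N+1$ on the subalgebra $\CC[a_0,\ldots,a_N]$; thus $\exp(\lambda\D)\,f$ is a finite sum and a polynomial in $\lambda$, with no analytic content whatsoever. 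The one conceptual step to get right — recognizing the action as the integration of a locally nilpotent derivation — becomes transparent once one notes that the divided powers $\lambda^{[h]}$ appearing in~\eqref{azio1} are exactly the coefficients of the exponential series.
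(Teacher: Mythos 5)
Your proof is correct and takes essentially the same route as the paper: the paper's argument consists of identifying $\D$ as the infinitesimal generator of the $\Cplus$-action and then invoking the standard equivalence between invariance under a one-parameter group and annihilation by its generator, and you simply make the converse direction explicit by exponentiating the locally nilpotent derivation and checking $\lambda\cdot a_k=\exp(\lambda\D)\,a_k$ against formula~\eqref{azio1}. One small inaccuracy worth fixing: $\D$ is \emph{not} nilpotent of index at most $N+1$ on all of $\CC[a_0,\ldots,a_N]$ (e.g.\ $\D^{2N}(a_N^2)\neq 0$; the index grows with the degree of $f$, bounded by weight${}+1$), but local nilpotency — which does hold and is all you need for $\exp(\lambda\D)f$ to be a finite sum — is immediate since $\D$ lowers the weight by $1$.
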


\ps
\subsection{Weight}\label{wei}
In classical literature the {\it weight\/}  is a way of counting in a monomial in the $a_i$  the sum of the indices $i$ appearing. That is $a_i$ has weight $i$   and $\prod_ja_j^{h_j}$  has weight $\sum_j h_jj.$ The use of this is in the following.
\begin{definition}\label{isob} 
A polynomial $f(a_0,a_{1},\ldots)$ is {\em isobaric of weight $g$}  if all the monomials appearing in $f$ have weight $g$. 
\end{definition}  
Of course, every polynomial $f(a_0,\ldots,a_n)$  decomposes in a unique way into the sum of homogeneous and isobaric components. 
\begin{definition}\label{deis} 
The algebra $R=\CC[a_0,a_{1}, \ldots]$  decomposes into the direct sum of its components $R_{n,g}$  formed by 
the  homogeneous polynomials of degree $ n$ and weight $g$. This is a bigrading as algebra.
\end{definition} 
Of course, the polynomials in $R_{n,g}$  depend only on the variables   $a_0,  a_1,\ldots,a_g$. 

\begin{remark}\label{gma}
If $n\geq g$, then  one has  $R_{n,g}=a_0^{n-g} R_{g,g}. $
\end{remark}
This definition in modern language is that of characters of a torus. It applies to $U$-invariants due to the following considerations. 
The multiplicative group  $\Cst$ of nonzero complex numbers acts by automorphisms on polynomials $f(a_0,a_1,\ldots)$ 
by $\mu\cdot a_i=\mu^i a_i$. Then a polynomial  $  f(a_0,a_1,\ldots)$ is isobaric of weight $g$ if and only if 
$$
\mu\cdot    f(a_0,a_1,\ldots)=\mu^g    f(a_0,a_1,\ldots).
$$  
It is easy to see that this action by $\Cst$ has the following commutation with the operator $\D:=\sum_{i=1}^\infty a_{i-1}\pd{}{a_i}$:
$$
\mu\cdot \D f = \mu^{-1}(\D (\mu\cdot f)).
$$
As a consequence, since $\D(a_i)=a_{i-1}$, we have the following result.
\begin{lemma}\label{mapis} 
The operator  $\D :=\sum_{i=1}^\infty a_{i-1}\pd{}{a_i}$ maps polynomials of degree $n$ and weight $g$ into polynomials of degree $n$ and weight $g-1$.
\end{lemma}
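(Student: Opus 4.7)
The plan is to verify the two assertions (degree preserved at $n$, weight dropped to $g-1$) separately, and to observe that the weight half in fact falls out for free from the commutation relation already derived just above the statement.

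First I would dispose of the degree claim: each summand $a_{i-1}\pd{}{a_i}$ of $\D$ is homogeneous of degree $0$ with respect to the total-degree grading on $R$, since $\pd{}{a_i}$ lowers degree by one and multiplication by $a_{i-1}$ raises it by one. Summing over $i$ therefore gives $\D(R_{n,*})\subseteq R_{n,*}$.

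For the weight claim, the cleanest route is to feed the identity $\mu\cdot \D f = \mu^{-1}\D(\mu\cdot f)$ stated just before the Lemma with an $f\in R_{n,g}$. By definition of weight one has $\mu\cdot f=\mu^g f$, so $\mu\cdot \D f=\mu^{g-1}\D f$, and this is exactly the statement that $\D f$ is isobaric of weight $g-1$. Equivalently, and if one prefers to avoid invoking the torus action, the same conclusion reads off monomial by monomial: the operator $a_{i-1}\pd{}{a_i}$ replaces a factor $a_i$ of weight $i$ by a factor $a_{i-1}$ of weight $i-1$, so every monomial occurring in $\D m$ has weight exactly one less than that of $m$.

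I do not anticipate a real obstacle here; the only point that deserves even a sentence is that although $\D$ is defined as an infinite sum, on any given polynomial all but finitely many summands vanish, so $\D$ is well defined as an endomorphism of $R$ and the arguments above make sense.
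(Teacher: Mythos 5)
Your proof is correct and follows essentially the same route as the paper, which derives the lemma directly from the commutation relation $\mu\cdot \D f = \mu^{-1}\D(\mu\cdot f)$ stated just before it. Your additional remarks on degree preservation, the monomial-by-monomial alternative, and the local finiteness of the infinite sum simply make explicit details the paper leaves to the reader.
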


By Theorem~\ref{diff}, this implies the next result.
\begin{corollary}\label{isob.cor} 
The isobaric components of a $U$-invariant are also $U$-invariants.
\end{corollary}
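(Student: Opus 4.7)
The plan is to combine Theorem~\ref{diff} (which characterizes $S$ as $\ker\D$) with Lemma~\ref{mapis} (which says $\D$ shifts weight by $-1$) and invoke the general principle that the kernel of a weight-graded operator is itself weight-graded.

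First I would recall that by Definition~\ref{deis} every polynomial $f\in R$ admits a unique decomposition into isobaric components
\[
f=\sum_{g\ge 0} f_g,\qquad f_g\in\bigoplus_{n}R_{n,g},
\]
where only finitely many $f_g$ are nonzero. Next I would apply $\D$ termwise: by Lemma~\ref{mapis} (or equivalently by the commutation relation $\mu\cdot\D f=\mu^{-1}\D(\mu\cdot f)$ already observed in the text) we have $\D f_g\in \bigoplus_n R_{n,g-1}$, so that
\[
\D f=\sum_{g\ge 1}\D f_g
\]
is itself the decomposition of $\D f$ into its isobaric components, indexed now by weight $g-1$.

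Now suppose $f\in S$, i.e.\ $\D f=0$ by Theorem~\ref{diff}. Since the weight decomposition of $R$ is a direct sum, the vanishing of $\D f$ forces the vanishing of each of its isobaric components, hence $\D f_g=0$ for every $g$. Applying Theorem~\ref{diff} again, each $f_g$ lies in $S$, proving the corollary.

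There is really no serious obstacle here: the only point to check is that the isobaric decomposition of $R$ is compatible with $\D$, and this is exactly the content of Lemma~\ref{mapis}. The argument is the standard one that any operator which is homogeneous of some degree with respect to a grading has a graded kernel.
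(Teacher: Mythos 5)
Your proof is correct and follows exactly the route the paper intends: the paper derives the corollary from Lemma~\ref{mapis} together with Theorem~\ref{diff} in a single line, and your write-up simply makes explicit the standard argument that an operator homogeneous with respect to a grading has a graded kernel. Nothing to add.
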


\begin{definition}\label{anw}
We denote by  $S_{n}$  the subspace of  $U$-invariants homogeneous of degree $n$ and by $S_{n,g} \subset S_{n}$  the ones isobaric of weight $g$:
\begin{equation*}
S_{n}= \bigoplus_{g=0}^{\infty} S_{n,g}, \quad S=\bigoplus_{n=0}^{\infty} S_{n} = \bigoplus_{n,g \in \NN} S_{n,g}.
\end{equation*}
\end{definition}

\ps
\subsection{Reducible elements}  
In the old literature  an invariant of some positive degree $k$ is called {\em  reducible} if it is   equal to a polynomial in invariants of strictly lower degree.

This  idea of course applies to an element of any commutative graded algebra, but
today it is an unfortunate expression, since in commutative algebra   
reducible means something else. So we shall use the word {\em decomposable}.

Nevertheless, the invariant theorists  of the $19^{th}$ century used this idea in order to understand {\em a minimal set of generators} for a ring of invariants. 

In modern terms, if $A=\oplus_{i=0}^\infty A_i$ is a commutative graded algebra, with $A_0=F$ the base field and setting $I:=\oplus_{i=1}^\infty A_i$, we know that a minimal set of generators for $A$ is a basis of $I$ modulo $I^2$.  So in order to describe such minimal bases one has to describe $I/I^2$, or rather to describe a {\em complementary space to $I^2$ in $I$}. 
This, of course, is not canonical, and in fact it is interesting to read some disputes between \name{Sylvester} and \name{Fa\`a di Bruno} about the best choice of representatives.

\ps
In our situation, we have for each $n$ the algebra $S(n)$ and the  corresponding maximal ideal $I_n$. 
For general $n$ little is known about $I_{n}/I_{n}^{2}$.  In fact, one of the high points of the theory was to prove that $I_n/I_{n}^{2}$ is finite dimensional. This is \name{Gordan}'s famous Finiteness Theorem \cite{Go1868Beweis-dass-jede-C}.

But  what  was quickly discovered is that an element of $S(n)$  which is indecomposable in $S(n)$ need not remain indecomposable in $S(n+1)$. In other words the maps $I_n/I_n^2\to I_{n+1}/I_{n+1}^2$ need  not   be injective,  or also, a minimal set of generators for $S(n)$ cannot be completed to one for $S(n+1)$. As an example we will see in section~\ref{U-invar.sec}  that the generator $D$ for $S(3)$  given by Remark~\ref{noperp}  is decomposable in $S(4)$.

\ps
We are now ready to give the main definition of this paper, that of {\em perpetuant} (Sylvester  \cite{Sy1882On-Subvariants-i.e}).
\begin{definition}\label{Perp}
A {\em perpetuant} is an indecomposable element of $S(n)$ which remains indecomposable in all $S(k), \ k\geq n$.
\end{definition}
In other words  it gives an element of $I_n/I_n^2$  which {\em lives forever}, that is it remains nonzero in all $I_k/I_k^2,\ \forall\  k\geq n$.  In this sense it is {\em perpetuant}.

Of course a  perpetuant is just an indecomposable element of the limit algebra $S$. Thus to describe perpetuants is strictly related to describe minimal sets of generators for the graded algebra $S$.  In other words, denoting by $I \subset S$ the maximal homogeneous ideal   of $S$ we want to describe $I/I^2$.  

This space decomposes into a direct sum
\begin{equation*}
I/I^2=\bigoplus_{n,g\in\NN}P_{n,g}
\end{equation*} 
with $P_{n,g}$ the image of $S_{n,g}$ (the elements in $I$  of degree $n$ and weight $g$).
We may, for convenience and abuse, refer to this space as  {\em the space of perpetuants}. 
\begin{definition}\label{spapp}
  A bigraded subspace of $I$  which is a complement of $I^2$  will be called   {\em a space of perpetuants}.
\end{definition} Observe that,  while the decomposable elements,  i.e. elements of  $I^2$,  as well as the perpetuants,   i.e the elements of  $I\setminus I^2$, are intrinsic objects, a {\em space of perpetuants} that is a complement of $I^2$ in $I$  is not intrinsic, but depends on the choice of some basis of $I$,  completing a  basis of $I^2$. \smallskip

The   Theorem of Stroh gives the generating function for the dimensions of the isobaric components of $I/I^{2}$,  and can be stated as
\begin{equation*}
\sum_{g=0}^
\infty\dim(P_{n,g}) \, x^g=\begin{cases}{\displaystyle\frac {x^{2^{n-1}-1}}{(1-x^{2})(1-x^{3})\cdots (1-x^{n})}}
&\text{for } n>2, \\
x^2/(1-x^2)& \text{for }n=2, \\
1 & \text{for } n=1.
\end{cases}
\end{equation*}
In Theorem~\ref{str}  we will  construct a complement of $I^2$ in $I$, see Theorem~\ref{basis-perp}, thus giving a possible solution to the problem posed by \name{Sylvester} of describing the spaces of perpetuants.
 
\ps
\subsection{Umbral calculus}  
This is one of the forgotten parts of old invariant theory, but it really is an anticipation of some aspects of tensor calculus.
The problem arises in  the computation of invariants of some group $G$ of linear transformations on a vector space $W$ (cf. \cite{Ro1999Two-turning-points} and \cite{RoTa1994The-Classical-Umbr}).

\subsubsection*{First step: polarization and restitution}
The first step is general. The homogeneous polynomial functions on $W$ of degree $k$ can be {\em fully polarized} giving rise, in characteristic 0, to a $G$-equivariant isomorphism between this space and the space of multilinear and symmetric functions in $k$ copies of  $W$. In formulas
$$
\OOO(W)_{k}=\Sym^k(W^{*})\simeq ({W^{*}}^{\otimes k})^{\Sk}
$$
where $\Sym^{k}(W^{*})$ is the $k^{th}$ symmetric power of $W^{*}$, and $\Sk$ is the symmetric group in $k$ letters acting in the obvious way on  ${W^{*}}^{\otimes k}$.

Given a homogeneous polynomial function $f(w)$ on $W$ of degree $k$, its {\em polarizations} $f_{\alpha}$ are obtained from the expansion 
\begin{equation}\label{pol.eq}
f(t_{1}w_{1}+\cdots+t_{k}w_{k})=\sum_{\substack{\alpha=(\alpha_{1},\ldots,\alpha_{k})\\\alpha_{1}+\cdots+\alpha_{k}=k}} 
t^\alpha f_\alpha(w_1,\dots,w_k), \quad
t^\alpha := t_{1}^{\alpha_{1}}\cdots t_{k}^{\alpha_{k}}.
\end{equation}
The {\it full polarization} $P f$ is the coefficient of the product $t_{1}\cdots t_{k}$:
$$
P f (w_1,\dots,w_k) := f_{1,\ldots,1}(w_1,\ldots,w_k).
$$
It is multilinear and symmetric.
In order to obtain the inverse map, called {\em restitution}, one sets all the $w_i$ equal to $w$, i.e. $F(w_{1},\ldots,w_{n})\mapsto F(w,\ldots,w)$. Starting with $f = f(w)$ and setting $w_{i}=w$ in  equation \eqref{pol.eq} we find
$$
f((t_{1}+\cdots+t_{k})w )=(t_{1}+\cdots+t_{k})^k f(w )= \sum t^\alpha f_\alpha(w ,\dots,w ).
$$
In particular, $P f(w,\ldots,w) = k! f(w)$  (the presence of the factor $ k!$   explains why this procedure works well only in characteristic zero).

\subsubsection*{Second step: umbrae of invariants}
Next assume that $W$ is some tensor representation of $\GL(V)$ for some space $V$. For instance, in the classical literature one finds $W=\OOO(V)_{n}=\Sym^n(V^*)$,  the space of homogeneous polynomial functions of degree $n$ on $V$. Inside $W$ one then has some special vectors, usually the {\em decomposable vectors}  which span $W$ and are stable under $\GL(V)$. For instance,  for $W=\Sym^n(V^*)$ we have the functions $\phi ^n$,  the $n^{\text{\it th}}$ powers of the linear forms $\phi\in V^*$, or their divided powers $\phi^{[n]}$. 

Then a linear function  on $W$ restricted to the elements $\phi ^{[n]}$, $\phi\in V^*$, gives rise  to a homogeneous polynomial of degree $n$  on $V^*$, and this establishes an isomorphism, between $W^*$ and   the space of  homogeneous polynomials of degree $n$  on $V^*$,        which is $\GL(V)$-equivariant. Similarly, a multilinear function on $k$ copies of $W$ restricted to $ \phi_1^{[n]},\ldots,\phi_k^{[n]}$, $\phi_j\in V^*$, gives rise  to a homogeneous polynomial of degree $n$  in each of the $k$ variables $\phi_i\in  V^*$, and this also establishes a $\GL(V)$-equivariant isomorphism between $(W^{\otimes k})^*$ and   the space of  multihomogeneous polynomials of degree $n$  on  $k$  vector variables in $V^*$ (compatible with the two actions of $S_k$). 

Thus a multilinear invariant  of  $k$  copies of $W$  under some subgroup $G\subset \GL(V)$ is encoded in a polynomial  in $k$  linear forms $\phi_1,\ldots,\phi_k\in V^*$,  called {\em umbrae},  which is multihomogeneous of degree $n$, symmetric in the variables $\phi_1,\ldots,\phi_k$, and invariant under $G$.  

\subsubsection*{The symbolic method}
Combining the two steps above we obtain a map, denoted by $\E$,  which associates to    a multihomogeneous symmetric invariant of degree $n$ of $k$ {\em umbral}--variables in $V^{*}$, a  homogeneous invariant of degree $k$  on $W$, 
after interpretation as multilinear polynomial in $k$  variables in $W$ and  setting all variables equal. 

This is the basis of the {\em symbolic method  for  binary forms} where $W:=\Sym^{g}(\CC^{2})=\CC[x,y]_{g}$ which we will describe now.

\ps
\subsection{Symbolic method for binary forms}
For binary forms an explicit algorithmic way  (which extends of course to forms in any number of variables) is the following. 
A linear form is an expression of type $\alpha_1x+\alpha_2y$. With the notation of \eqref{ddp}, we have 
$$
(\alpha_1x+\alpha_2y)^{[g]}=\sum_{i=0}^g \alpha_1^{[g-i]}\alpha_2^{[i]} x^{[n-i]}y^{[i]}:=\sum_{i=0}^g a_i x^{[g-i]}y^{[i]}.
$$
Let $\phi_i:= \alpha_{1,i}x+\alpha_{2,i}y$, $i=1,\ldots,n$, be $n$ linear forms. A polynomial homogeneous of degree $g$ in each  of the $\phi_i$  is a linear combination of monomials of type 
$\prod_{i=1}^n \alpha_{1,i}^{g-r_i}\alpha_{2,i}^{r_i }$ where $0\leq r_{i}\leq g$. Then the corresponding function on $W=\CC[x,y]_{g}$ under the map $\E$ is given by
\begin{equation}\label{lE}
\E\colon \prod_{i=1}^n \alpha_{1,i}^{[g-r_i]}\alpha_{2,i}^{[r_i ]}\mapsto \prod_{i=1}^n a_{r_i}.
\end{equation}
The $\SL(2,\CC)$-invariants of  the  forms $\phi_i$, $i=1,\ldots,k$ are generated by the quadratic invariants:
\begin{equation*}
(i,j)=(\phi_i,\phi_j):=\det \begin{bmatrix} \alpha_{1,i}& \alpha_{2,i}\\
\alpha_{1,j}& \alpha_{2,j}
\end{bmatrix}= \alpha_{1,i}  \alpha_{2,j}- \alpha_{1,j}  \alpha_{2,i}.
\end{equation*} 
Therefore, the space of invariants of degree $k$  of binary forms of degree $n$ is spanned by the evaluation $\E$ of the symbols  $\prod_t(i_t,j_t),\ i_t,j_t\in\{1,\ldots,k\}$  in which each of the indices $1,\ldots,k$ appears exactly $n$  times.

The first problem of this method is to  exhibit a list of symbols which give a basis of the corresponding invariants. But the main difficulty  is to understand which symbols correspond to  decomposable invariants. 

A large number of papers of $19^{th}$ century invariant theory is devoted to these problems culminating with \name{Gordan}'s proof of finite generation, see \cite{Go1868Beweis-dass-jede-C}, \cite{Go1987Vorlesungen-uber-I}. 

A  complete answer is not known, and probably too complex to be made explicit. For this reason Theorem~\ref{main} of \name{Stroh}, and our main result, Theorem~\ref{basis-perp}, are quite remarkable.

\begin{remark}
\be
\item 
The map $\E$  is a well defined linear homomorphism from the space of polynomials  in the $n$ variables $\phi_i$  with coordinates $ \alpha_{1,i},\alpha_{2,i}$ and homogeneous of degree $g$ in each of these variables to the space of polynomials of degree $n$  in the variables $a_0,\ldots,a_g$:
$$
\E\colon \CC[\alpha_{1,1}, \alpha_{2,1},\ldots,\alpha_{1,n},\alpha_{2,n}]_{(g,g,\ldots,g)} 
\to \CC[a_{0},\ldots,a_{g}]_{n}.
$$
\item 
In the classical literature one may see a statement as
$$
\phi_1\cong\phi_2\cong\ldots\cong\phi_n 
$$ 
to mean that the map $\E$  takes the same values when permuting the $k$  umbrae $\phi_i$.
More precisely, the map $\E$ is an isomorphism when restricted to symmetric polynomials  in the $k$ variables $\phi_i$.
\item 
Finally, $\E$ is equivariant with respect to the action of $\GL(2,\CC)$ induced by the action on the variables $x,y$.
\ee
\end{remark}\label{ugg}

Formula~\eqref{lE}  suggests, when considering $U$-invariants,  to work  only with  the variables $\alpha_{i}:=\alpha_{2,i}$ and to replace the map $\E$ with 
\begin{equation}\label{lE1}
\E\colon \prod_{i=1}^n   \alpha_{ i}^{[r_i ]}\mapsto \prod_{i=1}^n a_{r_i},\quad (r_i\geq 0).
\end{equation} Notice that, in this formula the $\alpha_i$ which do not appear, i.e. for which $r_i=0$, contribute each to a factor $a_0$.

It follows that $\E$  maps linearly the space of   polynomials in $\alpha_{1},\ldots, \alpha_{n}$ to the space of polynomials homogeneous of degree $n$   in the variables $a_0,a_{1},a_{2},\ldots$,
$$
\E\colon \CC[\alpha_{1},\ldots,\alpha_{n}] \to \CC[a_{0},a_{1},a_{2},\ldots], \ 
\alpha_{1}^{[r_{1}]}\cdots \alpha_{n}^{[r_{n}]} \mapsto a_{r_{1}}\cdots a_{r_{n}},
$$ 
where a homogeneous polynomial of degree $g$ is mapped to an isobaric polynomial of weight $g$ and  homogeneous  of degree $n$. 
The map $\E$  commutes with the permutation action on the $\alpha_{ i}$, $i=1,\ldots,n$, 
e.g.
$$
\E(\alpha_1^{[3]}\alpha_2^{[2]})=\E(\alpha_3^{[3]}\alpha_1^{[2]}) =a_{0}^{n-2}a_2a_3 \text{ \ and \ } \E(\alpha_i^{[2]}\alpha_j^{[2]})=a_{0}^{n-2}a_2^2.
$$

\begin{remark}\label{hom} 
The map $\E$ is not a homomorphism of algebras. Nevertheless,  if we decompose the umbrae  in two disjoint subsets $\alpha_1,\ldots,\alpha_h$ and $\alpha_{h+1},\ldots,\alpha_k$ and consider two polynomials $f(\alpha_1,\ldots,\alpha_h)$ and $g(\alpha_{h+1},\ldots,\alpha_k)$  we have
\begin{equation*}\label{moE}
\E\left(f(\alpha_1,\ldots,\alpha_h)\cdot  g(\alpha_{h+1},\ldots,\alpha_n)\right)=
\E(f(\alpha_1,\ldots,\alpha_h))\cdot \E(g(\alpha_{h+1},\ldots,\alpha_n)).
\end{equation*}
\end{remark}
In this formalism we loose the action of $\GL(2,\CC)$, but we still have the translation action by $\Cplus$, which commutes with $\E$. In term of differential operators,   we see that
\begin{equation}\label{ecd}
\E\circ \sum_{i=1}^n \pd{}{\alpha_i}= \sum_{i=1}^\infty a_{i-1}\pd{}{a_i}\circ \E.
\end{equation}

When working with $U$-invariants  the previous umbral calculus corresponds  to the further simplification of considering, instead of binary forms, polynomials in $x$, and the action is just translation $x\mapsto x-\lambda$. 
One may further restrict to monic polynomials, and  so use the formula
\begin{equation*}\label{UMb1}
( x+\alpha  )^{[g]}=\sum_{i=0}^g  \alpha ^{[i]} x^{[g-i]} = x^{[n]}+\sum_{i=1}^n a_i x^{[g-i]} .
\end{equation*}
Given any polynomial in the $a_1,a_{2},\ldots$ which is invariant under  translation and of given weight  one can reconstruct a corresponding  $U$-invariant by making it homogeneous  by inserting powers of $a_0$. 

\par\medskip
\section{Stroh's potenziante and duality}
\subsection{Potenziante} Following 
\name{Stroh} we define  the {\em potenziante} by
\begin{equation}\label{pot0}
\pi_{n,g}:=\E\left((\sum_{j=1}^n\lambda_j\alpha_j)^{[g]}\right)=
\sum_{\substack{r_{1},\ldots,r_{n}\in \NN\\r_{1}+\cdots+r_{n}=g}} \lambda_{1}^{r_{1}}\cdots\lambda_{n}^{r_{n}} \, 
a_{r_{1}}\cdots a_{r_{n}}
\end{equation}   
where the $\alpha_1\ldots,\alpha_{n}$ are all umbrae, and one uses  Formula~\eqref{lE1}. Moreover,
\begin{equation*}
\sum_{g=0}^\infty \pi_{n,g}=\E(\exp { \sum_{j=1}^n\lambda_j\alpha_j}).
\end{equation*}
Notice that $\pi_{n,g}$ is a polynomial in the variables $\lambda_1,\ldots,\lambda_n$ and the variables $a_0,\ldots,a_g$.  Sometimes we need to change the variables $\lambda_{j}$, so we shall stress this dependence by writing:
$$
\pi_{n,g}=\pi_{n,g}(\lambda;a) = \pi_{n,g}(\lambda_1,\ldots,\lambda_n;a_0,a_1,\ldots,a_g). 
$$
By construction, $\pi_{n,g}(\lambda;a)$  is homogeneous of degree $n$ and isobaric of weight $g$ in the $a_{i}$, and symmetric and of degree $g$ in the $\lambda_j$.  So it can be developed  in term of the symmetric functions of degree $g$  in  the $\lambda_j$. 

\begin{definition}\label{sgu} 
Denote by $\Sigma_{n,g}=\CC[\lambda_{1},\ldots,\lambda_{n}]_{g}^{\Sn} \subset \CC[\lambda_{1},\ldots,\lambda_{n}]$ the subspace of {\it symmetric polynomials\/}  
in $\lambda_1,\ldots,\lambda_n$ which are homogeneous of degree $g$.
\end{definition} The space $\Sigma_{n,g} $  has several useful bases, all indexed by partitions:
$$  h_{1}\geq h_{2}\geq\cdots\geq h_{n} \geq 0,\ h_i\in \mathbb N,\quad h_{1}+\cdots +h_{n}=g.$$

We first take as basis of $\Sigma_{n,g}$ the {\it total monomial sums  $m_{h_1,\dots,h_n}$}, i.e. the sum over the $\Sn$-orbit of   $\lambda_{1}^{h_{1}}\cdots \lambda_{n}^{h_{n}}$ ($\Sn$ the symmetric group on $n$ elements) where 
$  h_{1}\geq h_{2}\geq\cdots\geq h_{n} \geq 0$ and  $h_{1}+\cdots +h_{n}=g$:
$$
m_{h_{1},\ldots,h_{n}}(\lambda) :=\sum_{\Sn\text{-orbit}}\sigma(\lambda_{1}^{h_{1}}\cdots\lambda_{n}^{h_{n}}).
$$
It follows from the definition of $\pi_{n,g}(\lambda;a)$ (see formula~\eqref{pot0}) that the monomial 
$a_{h_{1}}\cdots a_{h_{n}}$ appears in $\pi_{n,g}$ with coefficient $m_{h_{1},\ldots,h_{n}}(\lambda)$.
From this we easily get the following result.

\begin{proposition}
\be
\item In $\pi_{n,g}(\lambda;a)$ the total monomial sum  $m_{h_1,\dots,h_n}(\lambda)$ has as coefficient 
the product  $a_{h_1} a_{h_{2}}\cdots a_{h_{n}}$:
$$
\pi_{n,g}(\lambda;a) =\E\left((\sum_{r=1}^n\lambda_r\alpha_r)^{[g]}\right)
=\sum_{\substack{ h_1\geq \ldots\geq h_n \geq 0\\ h_{1}+\cdots+h_{n}=g}} m_{h_1,\dots,h_n} 
a_{h_1} a_{h_{2}} \cdots a_{h_{n}}. 
$$
\item These coefficients form a basis   of the space $\CC[a]_{n,g}$ of homogeneous polynomials in $a_0,  a_1,a_2,\ldots$ of degree $n$  and  weight $g$.
\ee
\end{proposition}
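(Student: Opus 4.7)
The plan is to carry out the bookkeeping starting from Formula~\eqref{pot0}, which already exhibits $\pi_{n,g}(\lambda;a)$ as a sum of one term per tuple $(r_1,\ldots,r_n)\in \NN^n$ with $r_1+\cdots+r_n=g$. Since the variables $a_i$ commute, the product $a_{r_1}\cdots a_{r_n}$ depends only on the multiset $\{r_1,\ldots,r_n\}$, which is uniquely encoded by a partition $h_1\geq h_2\geq\cdots\geq h_n\geq 0$ with $\sum h_i=g$. For part (1) I would therefore regroup the tuples in \eqref{pot0} according to their underlying partition: for a fixed such $(h_1,\ldots,h_n)$, the tuples $(r_1,\ldots,r_n)$ producing this multiset are exactly the distinct rearrangements of $(h_1,\ldots,h_n)$, which correspond bijectively to the distinct monomials in the $\Sn$-orbit of $\lambda_1^{h_1}\cdots\lambda_n^{h_n}$. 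Summing the associated $\lambda$-monomials therefore yields precisely $m_{h_1,\ldots,h_n}(\lambda)$ by Definition~\ref{sgu}, and regrouping \eqref{pot0} gives the stated formula.

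For part (2) I would describe explicitly the bijection between partitions and monomials: a partition $h_1\geq \cdots\geq h_n\geq 0$ with $\sum h_i=g$ determines the monomial $a_{h_1}\cdots a_{h_n}=a_0^{n_0}a_1^{n_1}\cdots a_g^{n_g}$, where $n_i$ is the multiplicity of $i$ in the multiset $\{h_1,\ldots,h_n\}$. This assignment is a bijection onto the set of all monomials in the $a_i$ of degree $n$ and weight $g$, since from the multiplicities $(n_0,n_1,\ldots,n_g)$ with $\sum n_i=n$ and $\sum i\,n_i=g$ one recovers the partition uniquely by listing each $i$ with multiplicity $n_i$ in weakly decreasing order. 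Because these monomials form the standard basis of $\CC[a]_{n,g}$, the claim follows at once.

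There is no real obstacle here; the statement is essentially a regrouping followed by a combinatorial bijection. The only point requiring care is to match the convention of Definition~\ref{sgu}, in which $m_{h_1,\ldots,h_n}(\lambda)$ is the sum over the distinct orbit elements (each counted once) rather than a sum over all of $\Sn$ with repetitions. This matches exactly the enumeration of tuples $(r_1,\ldots,r_n)$ with prescribed multiset, and the two parts of the proposition then drop out together.
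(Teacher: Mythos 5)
Your argument is correct and is essentially the paper's own proof: the paper simply observes that, from Formula~\eqref{pot0}, the monomial $a_{h_1}\cdots a_{h_n}$ appears in $\pi_{n,g}$ with coefficient $m_{h_1,\ldots,h_n}(\lambda)$, which is exactly your regrouping of tuples by their underlying partition, and part (2) is the standard identification of partitions of $g$ into at most $n$ parts with the monomial basis of $\CC[a]_{n,g}$. The only nitpick is that the convention for $m_{h_1,\ldots,h_n}$ (sum over distinct orbit elements) is stated in the text following Definition~\ref{sgu} rather than in that definition itself.
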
\label{tms}

\begin{example}\label{g3} With $n=g=3$ we find
\begin{multline*}
(\lambda_1\alpha_1+\lambda_2\alpha_2+\lambda_3\alpha_3)^{[3]}=\lambda_1^3\alpha_1^{[3]}+ \lambda_2^3\alpha_2^{[3]}+ \lambda_3^3\alpha_3^{[3]}+ 
\lambda_1^2\lambda_2 \alpha_1^{[2]}\alpha_2+ \lambda_1^2\lambda_3 \alpha_1^{[2]}\alpha_3 \\ +\lambda_2^2\lambda_1 \alpha_2^{[2]}\alpha_1+\lambda_2^2\lambda_3 \alpha_2^{[2]}\alpha_3+\lambda_3^2\lambda_1 \alpha_3^{[2]}\alpha_1+\lambda_3^{2}\lambda_2 \alpha_3^{[2]}\alpha_2  + \lambda_1  \lambda_2  \lambda_3   \alpha_1 \alpha_2\alpha_3.
\end{multline*}
Applying  $\E$ we get
$$
(\lambda_1^3 + \lambda_2^3 + \lambda_3^3)a_0^2 a_3  + (\lambda_1^2\lambda_2  +\lambda_1^2\lambda_3  +\lambda_2^2 \lambda_1 +\lambda_2^2\lambda_3+\lambda_3^2\lambda_1+\lambda_3^2\lambda_2) a_0 a_1a_2   +   \lambda_1  \lambda_2  \lambda_3   a_1 ^3
$$
which is equal to
$$
m_{3,0,0}(\lambda) a_0^2 a_3 + m_{2,1,0}(\lambda) a_{0}a_{1}a_{2} + m_{1,1,1}(\lambda) a_{1}^{3}.$$
With $n=2$ and $g=4$ we find
$$
(\lambda_1\alpha_1+\lambda_2\alpha_2)^{[4]}=\lambda_1^4\alpha_1^{[4]}+ \lambda_2^4\alpha_2^{[4]}+  (\lambda_1^3\lambda_2 \alpha_1^{[3]}\alpha_2 +\lambda_2^{3}\lambda_1 \alpha_2^{[3]}\alpha_1)+  \lambda_1^2\lambda_2^2  \alpha_1^{[2]}\alpha_2^{[2]}.
$$
Applying  $\E$ this gives 
$$
(\lambda_1^4 + \lambda_2^4 ) a_0^3a_4  + (\lambda_1^3\lambda_2   +\lambda_2^3\lambda_1 )a_0^2a_1a_3 +    \lambda_1^2\lambda_2^2a_2^2,
$$
which is equal to
$$
m_{4,0}(\lambda) a_{0}^{3}a_{4} + m_{3,1}(\lambda) a_{0}^{2}a_{1}a_{3} + m_{2,2}(\lambda) a_{2}^{2}.
$$
Finally, with $n=3$ and $g=2$ we have
\begin{multline*}
(\lambda_1\alpha_1+\lambda_2\alpha_2+\lambda_3\alpha_3)^{[2]}=\lambda_1^2\alpha_1^{[2]}+ \lambda_2^3\alpha_2^{[2]}+ \lambda_3^3\alpha_3^{[2]}\\+ 
\lambda_1\lambda_2 \alpha_1\alpha_2+ \lambda_1\lambda_3 \alpha_1\alpha_3  +\lambda_2\lambda_3 \alpha_2\alpha_3\ .
\end{multline*}
Applying  $\E$ we get
$$
(\lambda_1^2 + \lambda_2^2 + \lambda_3^2)a_0 ^2a_2  + (\lambda_1\lambda_2  +\lambda_1 \lambda_3 +\lambda_2\lambda_3) a_0 a_1^2  , 
$$
which is equal to
$$
m_{2,0,0}(\lambda) a_0^2 a_2 + m_{1,1,0}(\lambda) a_{0}a_{1}^2.$$
Notice that all coefficients are divisible by $a_0$ as expected, because $n>g$.  
\end{example}

\ps
\subsection{Duality}
Proposition~\ref{tms} can be understood in the following way. The tensor 
\begin{equation*}
\pi_{n,g}(\lambda;a) \in \Sigma_{n,g}\otimes\CC[a]_{n,g} \subset \CC[\lambda_{1},\ldots,\lambda_{n}]^{\Sn}\otimes_{\CC}\CC[a_{0},\ldots,a_{g}]
\end{equation*}
defines a {\it duality\/} between the subspace $\Sigma_{n,g}\subseteq \CC[\lambda_{1},\ldots,\lambda_{n}]^{\Sn}$ of homogeneous symmetric polynomials of degree $g$ in $n$ variables, and the subspace $\CC[a]_{n,g}\subseteq\CC[a_{0},\ldots,a_{g}]$ of polynomials, homogeneous  of degree $n$ and with weight $g$. \smallskip

In general, given two finite dimensional vector spaces $U,W$  and denoting by  $U^{\vee}, W^{\vee}$ their duals one has the canonical isomorphisms
\begin{equation}\label{dut}
U\otimes W\simeq \Hom(U^{\vee},W)\simeq \Hom(W^{\vee},U).
\end{equation} 
E.g., if $\pi \in U\otimes W$ is a tensor, then the corresponding map $\pi\colon U^{\vee} \to W$ is given by 
$\phi\mapsto(\phi\otimes \id_{W})(\pi)$.

A {\em dualizing tensor $\pi\in U\otimes W$} is an element which corresponds, under these isomorphisms, to an isomorphism $U^{\vee}\simto  W$ (or $W^{\vee}\simto U$). Thus, a dualizing tensor $\pi$  equals, for any basis $u_1,\ldots,u_k$ of $U$, to $\pi=\sum_{i=1}^k u_i\otimes w_i$, where $w_1,\ldots,w_k$ is a basis of $W$. 

\begin{definition}
Let $\pi\in U \otimes W$ be a dualizing tensor.
If $M \subseteq U$ is a subspace, then the {\it orthogonal subspace\/} $M^{\perp} \subseteq W$ is defined to be the image of $(U/M)^{\vee}$ in $W$ under the isomorphism $U^{\vee} \simto W$ corresponding to $\pi$.\end{definition}

Choosing a basis $(u_{i})_{i=1}^{n}$ of $U$ such that $(u_{j})_{j=1}^{m}$ is a basis of $M$ and writing $\pi = \sum_{i}u_{i}\otimes w_{i}$, then $(w_{k})_{k=m+1}^{n}$ is a basis of $M^{\perp}$. Moreover, the image of $\pi$ in $U/M \otimes W$ defines a dualizing tensor $\bar\pi \in U/M \otimes M^{\perp}$.

\begin{remark}\label{sdua}
In general, a tensor $\pi\in U\otimes W$ gives, via the isomorphism~\eqref{dut}, two maps
\begin{equation}\label{ippi}
\pi_1\colon W^{\vee}\to U,\quad \pi_2:U^{\vee}\to  W. 
\end{equation} 
and we have the following:
\be
\item
$\pi_2=\pi_1^{\vee}$, $\im(\pi_1)=\ker(\pi_2)^\perp$, $\im(\pi_2)=\ker(\pi_1)^\perp$.
\item
$\pi\in \im(\pi_1)\otimes \im(\pi_2)$ is a dualizing tensor for these two spaces.
\item 
If  $\pi=\sum_{i=1}^k u_i\otimes w_i$, and if the $u_i$ are linearly independent, then the  image $\im(\pi_2)$ of the corresponding map is the span of the elements $w_i$. Similarly, if the $w_{i}$ are linearly independent.
\item 
If $U'\subseteq U$ is a subspace and $\pi\in U'\otimes W$, then the  associated subspace in $W$  is the same when computed with $U$ or $U'$.
\ee
\end{remark}
\begin{proof}[Sketch of Proof]
Let $\pi=\sum_{i=1}^ku_i\otimes w_i$ with $k$ minimal. Then clearly both the $u_i$ as well as the $w_j$ are linearly independent, and they span the two spaces $\im(\pi_1)$ and $\im(\pi_2)$. Now the various claims follow easily.
\end{proof}

Applying this to the dualizing tensor 
$$\pi_{n,g} \in 
\Sigma_{n,g}\otimes_{\CC}\CC[a_{0},\ldots,a_{g}]_{n,g}
$$ 
we obtain, from Proposition~\ref{tms}, the following two isomorphisms:
\begin{align*}
\Sigma_{n,g}^{\vee} \simto \CC[a]_{n,g}\colon& 
\phi\mapsto (\phi\otimes \id)(\pi_{n,g})=    
(\phi\otimes \id)(\textstyle{\sum}_{h} m_{h}\otimes a_{h})=
\textstyle{\sum}_{h}\phi(m_{h})a_{h},\\
\CC[a]_{n,g}^{\vee} \simto \Sigma_{n,g}\colon & 
\psi\mapsto (\id \otimes \psi)(\pi_{n,g})=   (\id \otimes \psi)(\textstyle{\sum} _{h} m_{h}\otimes a_{h})=
\textstyle{\sum}_{h}\psi(a_{h})m_{h},
\end{align*}
As a consequence of the remark above this gives the next result.

\begin{proposition}\label{LaD}
For every subspace $M \subset \Sigma_{n,g}$ we have the orthogonal subspace $M^{\perp} \subset \CC[a]_{n,g}$, and the duality between $\Sigma_{n,g}/M$ and $M^{\perp}$ is given by the image of $\pi_{n,g}$ in $\Sigma_{n,g}/M\otimes \CC[a_{0},\ldots,a_{g}]_{n,g}$. 

Given a basis  $u_1,\ldots,u_N$ of $\Sigma_{n,g}$ such that $u_1,\ldots,u_m$ is a basis of $M$, and writing $\pi_{n,g}=\sum_{j=1}^{N}u_j\otimes b_j$, then  the elements $b_{m+1},\ldots, b_N$  form a basis of $M^\perp.$
\end{proposition}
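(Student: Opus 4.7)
The plan is to reduce this entirely to the abstract framework on dualizing tensors laid out just before the proposition, so the proof is essentially bookkeeping once the single input from invariant theory, namely Proposition~\ref{tms}, is in place. My first step would be to invoke Proposition~\ref{tms} to conclude that $\pi_{n,g}$, viewed in $\Sigma_{n,g}\otimes\CC[a]_{n,g}$, is a dualizing tensor: writing it in the basis of total monomial sums as $\pi_{n,g}=\sum_{h} m_{h_{1},\dots,h_{n}}\otimes a_{h_{1}}\cdots a_{h_{n}}$, the two families are bases of $\Sigma_{n,g}$ and $\CC[a]_{n,g}$ respectively, so the induced map $\Sigma_{n,g}^{\vee}\to\CC[a]_{n,g}$ is an isomorphism by definition. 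In particular, for \emph{any} basis $u_{1},\dots,u_{N}$ of $\Sigma_{n,g}$, the expansion $\pi_{n,g}=\sum_{j}u_{j}\otimes b_{j}$ produces a basis $b_{1},\dots,b_{N}$ of $\CC[a]_{n,g}$, since the $b_{j}$ are the images of the dual basis $u_{j}^{\vee}$ under this isomorphism.

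Next I would unwind the definition of $M^{\perp}$. By definition, $M^{\perp}\subset\CC[a]_{n,g}$ is the image of $(\Sigma_{n,g}/M)^{\vee}$ under the isomorphism $\Sigma_{n,g}^{\vee}\simto\CC[a]_{n,g}$ induced by $\pi_{n,g}$. Choosing $u_{1},\dots,u_{m}$ a basis of $M$ and completing it to $u_{1},\dots,u_{N}$ a basis of $\Sigma_{n,g}$, the subspace $(\Sigma_{n,g}/M)^{\vee}\subset\Sigma_{n,g}^{\vee}$ is exactly the span of the dual basis vectors $u_{m+1}^{\vee},\dots,u_{N}^{\vee}$ (the functionals vanishing on $M$). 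Applying the isomorphism, whose effect on $u_{j}^{\vee}$ is to produce $b_{j}$, yields at once that $b_{m+1},\dots,b_{N}$ is a basis of $M^{\perp}$. This is precisely the explicit description the proposition asserts.

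Finally, for the duality statement between $\Sigma_{n,g}/M$ and $M^{\perp}$, I would verify that the image $\bar\pi_{n,g}\in \Sigma_{n,g}/M\otimes\CC[a]_{n,g}$ actually lies in $\Sigma_{n,g}/M\otimes M^{\perp}$ and is a dualizing tensor there. In the chosen basis, $\bar\pi_{n,g}=\sum_{j=m+1}^{N}\bar u_{j}\otimes b_{j}$, because the contributions of $u_{1},\dots,u_{m}$ vanish modulo $M$; the classes $\bar u_{m+1},\dots,\bar u_{N}$ form a basis of $\Sigma_{n,g}/M$ and, by the previous paragraph, $b_{m+1},\dots,b_{N}$ form a basis of $M^{\perp}$, so $\bar\pi_{n,g}$ is a dualizing tensor by the general characterisation recalled just above the proposition.

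I do not anticipate a genuine obstacle: the only substantive ingredient is Proposition~\ref{tms}, which certifies that $\pi_{n,g}$ is dualizing. What requires a little care is ensuring that the identification $(\Sigma_{n,g}/M)^{\vee}=\{\phi\in\Sigma_{n,g}^{\vee}\mid\phi|_{M}=0\}$ is matched with the correct basis $u_{m+1}^{\vee},\dots,u_{N}^{\vee}$; this is a standard linear algebra check but is the one place where a sign or index slip could creep in.
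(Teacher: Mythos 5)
Your proposal is correct and follows essentially the same route as the paper: the paper also derives Proposition~\ref{LaD} directly from Proposition~\ref{tms} (which shows $\pi_{n,g}$ is a dualizing tensor, being a sum $\sum_h m_h\otimes a_{h_1}\cdots a_{h_n}$ over two bases) combined with the general linear-algebra facts about dualizing tensors and orthogonal subspaces stated just before it. Your write-up merely makes explicit the identification of $(\Sigma_{n,g}/M)^{\vee}$ with the annihilator of $M$ and the computation $(u_j^{\vee}\otimes\id)(\pi_{n,g})=b_j$, which the paper leaves implicit.
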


Now consider some homogeneous ideal $J \subset\QQ[\lambda_1,\ldots,\lambda_n]$  and the corresponding quotient map $\QQ[\lambda_1,\ldots,\lambda_n]\to  \QQ[\lambda_1,\ldots,\lambda_n]/J$. By restriction, we have a homomorphism
$$
\QQ[\lambda_1,\ldots,\lambda_n]^{\Sn} \to \QQ[\lambda_1,\ldots,\lambda_n]/J
$$ 
with kernel  $\QQ[\lambda_1,\ldots,\lambda_n]^{\Sn}\cap J$.  Denoting the images of the $\lambda_i$  by $\bar \lambda_i$  we get an image of the potenziante 
\begin{equation*}
\pi_{n,g}(\bar\lambda_{1},\ldots,\bar\lambda_{n}) = \E\left((\sum_{j=1}^n\bar\lambda_j\alpha_j)^{[g]}\right)
\in \CC[\bar\lambda_{1},\ldots,\bar\lambda_{n}]\otimes_{\CC}\CC[a_{0},\ldots,a_{n}],
\end{equation*} 
and we obtain the following result.

\begin{proposition}\label{bass} 
If $(B_{i})_{i\in I_{g}}$ is a basis of $\Sigma_{n,g}/\Sigma_{n,g}\cap J$ and if
$$
\pi_{n,g}(\bar \lambda_{1},\dots,\bar\lambda_{n})=\sum_{i\in I_{g}}B_{i}\otimes A_{i},
$$
then $(A_{i})_{i\in I_{g}}$ is a basis of the subspace of $\CC[a]_{n,g}$ orthogonal to 
$\Sigma_{n,g}\cap J.$
\end{proposition}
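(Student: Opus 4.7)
The plan is to derive Proposition~\ref{bass} as a direct corollary of Proposition~\ref{LaD} applied to the subspace $M := \Sigma_{n,g} \cap J$ of $\Sigma_{n,g}$. The only real content is to interpret the image $\pi_{n,g}(\bar\lambda_{1},\ldots,\bar\lambda_{n})$ correctly as a tensor in $\Sigma_{n,g}/M \otimes \CC[a]_{n,g}$; once this is done, everything reduces to the dualizing-tensor formalism already established.

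First I would clarify the source of the element $\pi_{n,g}(\bar\lambda_{1},\ldots,\bar\lambda_{n})$. The quotient map $q \colon \CC[\lambda_{1},\ldots,\lambda_{n}] \surto \CC[\bar\lambda_{1},\ldots,\bar\lambda_{n}]=\CC[\lambda_{1},\ldots,\lambda_{n}]/J$ restricts on $\Sigma_{n,g}$ to a linear map with kernel exactly $\Sigma_{n,g}\cap J = M$, so its image in $\CC[\bar\lambda_{1},\ldots,\bar\lambda_{n}]$ is canonically identified with $\Sigma_{n,g}/M$. Applying $q \otimes \id$ to the potenziante $\pi_{n,g} \in \Sigma_{n,g}\otimes \CC[a]_{n,g}$ yields precisely $\pi_{n,g}(\bar\lambda_{1},\ldots,\bar\lambda_{n})$, which therefore lives in $\Sigma_{n,g}/M \otimes \CC[a]_{n,g}$.

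Next, by Proposition~\ref{LaD} with this choice of $M$, the image of $\pi_{n,g}$ in $\Sigma_{n,g}/M \otimes \CC[a]_{n,g}$ is a dualizing tensor between $\Sigma_{n,g}/M$ and the orthogonal subspace $M^{\perp} \subset \CC[a]_{n,g}$. The general property of a dualizing tensor recalled in Remark~\ref{sdua} then finishes the argument: for any basis $(B_{i})_{i\in I_{g}}$ of $\Sigma_{n,g}/M$, the unique expansion $\pi_{n,g}(\bar\lambda_{1},\ldots,\bar\lambda_{n}) = \sum_{i\in I_{g}} B_{i}\otimes A_{i}$ produces vectors $(A_{i})_{i\in I_{g}}$ that form a basis of the image of the induced isomorphism $(\Sigma_{n,g}/M)^{\vee}\simto M^{\perp}$, hence a basis of $M^{\perp}$.

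There is no serious obstacle: the only point that requires a moment of attention is the basis-change step. Proposition~\ref{LaD} is stated for a particular type of basis of $\Sigma_{n,g}$ extending a basis of $M$, whereas here an arbitrary basis $(B_{i})$ of the quotient is given. To bridge the two, I would lift the $B_{i}$ to elements $\tilde B_{i}\in \Sigma_{n,g}$ complementary to $M$, combine them with a basis of $M$ into a basis of $\Sigma_{n,g}$, and apply Proposition~\ref{LaD} to this basis; the coefficients of $\pi_{n,g}$ against the lifted part are a basis of $M^{\perp}$, and since the coefficients in $\Sigma_{n,g}/M \otimes \CC[a]_{n,g}$ depend only on the classes $B_{i}$, they coincide with the $A_{i}$ in the statement.
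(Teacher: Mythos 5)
Your proof is correct and follows essentially the same route as the paper: the paper derives Proposition~\ref{bass} directly from Proposition~\ref{LaD} applied to $M=\Sigma_{n,g}\cap J$, using that the quotient map identifies the image of $\Sigma_{n,g}$ in $\CC[\bar\lambda_{1},\ldots,\bar\lambda_{n}]$ with $\Sigma_{n,g}/M$ and that the image of the potenziante is the corresponding dualizing tensor. Your extra care about lifting an arbitrary basis of the quotient to one of $\Sigma_{n,g}$ extending a basis of $M$ is a correct filling-in of a step the paper leaves implicit.
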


\ps
\subsection{Description of the \texorpdfstring{$U$}{U}-invariants}
We have seen in Theorem~\ref{diff} that the $U$-invariants $S=\CC[a_{0},a_{1},\ldots]^{U}$ form the kernel of the differential operator $\D :=\sum_{i=1}^\infty a_{i-1}\pd{}{a_i}$. Moreover, $S$ is bigraded by degree and weight: $S = \bigoplus_{n,g \in \NN}S_{n,g}$, see Definition~\ref{anw}.
\subsubsection*{A very remarkable formula}
From
\begin{equation*}
(\sum_{i=1}^n \pd{}{\alpha_i})(\sum_{r=1}^n \lambda_r\alpha_r)^{[g]}=(\sum_{i=1}^n \lambda_i)(\sum_{r=1}^n \\\lambda_r\alpha_r)^{[g-1]}
\end{equation*}
we obtain, by Formula~\eqref{ecd},
\begin{equation}\label{pot3}
\begin{split}
\sum_{i=1}^\infty a_{i-1}\pd{}{a_i} \pi_{n,g} &=(\sum_{i=1}^n \lambda_i) \, \pi_{g-1,n},
\quad\text{or}\\
\D (\E(\exp{\sum_{j=1}^n\lambda_j\alpha_j})) &=e_1(\lambda)\,\E(\exp{\sum_{j=1}^n\lambda_j\alpha_j}). 
\end{split}
\end{equation}

\begin{remark}\label{tras}
The meaning of this formula is that, using the duality  between symmetric functions in $n$ variables and polynomials in the $a_i$ of degree $n$, the transpose of the operator $\D $  is the multiplication by $\sum_{i=1}^n \lambda_i$.
\end{remark}
The remarkable formula~\eqref{pot3} gives us a straightforward way of describing  both,  the operator $\D  $ and also a basis of the $U$-invariants.

For this we change the basis of the space $\Sigma_{n,g}$ of symmetric functions from the  total monomial sums  $m_{h_1,\dots,h_n}$ to the monomials $e_1^{k_1}\dots e_n^{k_n}$, $\sum_{j} jk_j=g$,
where   $e_{i}=e_{i}(\lambda_{1},\ldots,\lambda_{n})$ is the $i^{\text{\it th}}$ elementary symmetric function.

 Expressing the total monomial sums $m_{h_{1},\ldots,h_{n}}$ in the $e_{i}$'s we get, for some $\alpha_{h_1,\dots,h_n, k_1,\dots,k_n}\in\mathbb Z$:
\begin{equation}\label{moel}
m_{h_1,\dots,h_n}=\sum_{  k_1,\dots,k_n} \alpha_{h_1,\dots,h_n, k_1,\dots,k_n}e_1^{k_1}\dots e_n^{k_n}. 
\end{equation}

The potenziante $\pi_{n,g}$ in this new basis  appears as 
\begin{equation*}
\pi_{n,g}=\sum_{\substack{0\leq k_1,\ldots,k_n  \\  k_{1}+2k_{2}+\cdots+n k_{n}=g}}
e_1^{k_1}\dots e_n^{k_n}\tilde U_{k_1,\ldots,k_n},
\end{equation*}
where the new elements  $\tilde U_{k_{1},\ldots,k_{n}}$ are given by
\begin{equation}\label{tilU}
\tilde U_{k_{1},\ldots,k_{n}}=\sum_{h_1,\dots,h_n} \alpha_{h_1,\dots,h_n, k_1,\dots,k_n} \prod_{j=1}^n a_{h_j},
\end{equation} 
and also form a basis of $\CC[a]_{n,g}$. 
By formula~\eqref{pot3} we get
\begin{equation*}
\D \pi_{n,g}=\sum_{\substack{k_1,\ldots,k_n\geq 0\\  \sum ik_i=g}}e_1^{k_1}\dots e_n^{k_n}
\D \tilde U_{k_1,\ldots,k_n}
=\sum_{\substack{j_1,\ldots,j_n\geq 0\\  \sum i j_i=g-1}} e_1^{j_1+1}\dots e_n^{j_n}
\tilde U_{j_1,\ldots,j_n}=e_1\pi_{n,g-1}
\end{equation*}
which implies the following result.

\begin{corollary}
\be
\item 
We have
\begin{equation}\label{DU}
\D\tilde U_{k_1,\ldots,k_n}=\begin{cases}
0\quad &\text{if }k_1=0\\
\tilde U_{k_1-1,\ldots,k_n} &\text{if } k_1>0.
\end{cases}
\end{equation}
\item 
The elements $U_{k_{2},\ldots,k_{n}}:=\tilde U_{0,k_2,\ldots,k_n}$   form a basis of the space $S_{n,g}$ of the $U$-invariants of degree $n$ and weight $g$. 
\ee

{\em It is interesting to remark that these results hold over $\ZZ$ and not just over $\CC$.}

\end{corollary}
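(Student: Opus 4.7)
The plan is to derive both statements by comparing the two sides of the formula $\D\pi_{n,g}=e_1\pi_{n,g-1}$ from \eqref{pot3}, once $\pi_{n,g}$ has been expanded in the new basis $\{e_1^{k_1}\cdots e_n^{k_n}\}$ of $\Sigma_{n,g}$. The key observation is that $\D=\sum_{i\geq1}a_{i-1}\partial/\partial a_i$ acts only on the $a$-variables, and hence is $\Sigma_{n,g}$-linear when $\pi_{n,g}$ is regarded as an element of $\Sigma_{n,g}\otimes\CC[a]_{n,g}$. So applying $\D$ termwise to
\[
\pi_{n,g}=\sum_{k_{1}+2k_{2}+\cdots+nk_{n}=g} e_1^{k_1}\cdots e_n^{k_n}\,\tilde U_{k_1,\ldots,k_n}
\]
gives $\D\pi_{n,g}=\sum e_1^{k_1}\cdots e_n^{k_n}\,\D\tilde U_{k_1,\ldots,k_n}$.

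By \eqref{pot3}, this must agree with
\[
e_1\pi_{n,g-1}=\sum_{j_1+2j_2+\cdots+nj_n=g-1} e_1^{j_1+1}e_2^{j_2}\cdots e_n^{j_n}\,\tilde U_{j_1,\ldots,j_n}.
\]
Since the elementary symmetric polynomials $e_1,\ldots,e_n$ are algebraically independent, the monomials $e_1^{k_1}\cdots e_n^{k_n}$ with $\sum ik_i=g$ form a basis of $\Sigma_{n,g}$, and we may match coefficients. For tuples with $k_1=0$, no term of $e_1\pi_{n,g-1}$ contributes, giving $\D\tilde U_{0,k_2,\ldots,k_n}=0$. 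For $k_1\geq 1$, only $(j_1,j_2,\ldots,j_n)=(k_1-1,k_2,\ldots,k_n)$ contributes, yielding $\D\tilde U_{k_1,\ldots,k_n}=\tilde U_{k_1-1,k_2,\ldots,k_n}$. This proves (1).

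For (2), recall that $\{\tilde U_{k_1,\ldots,k_n}\mid \sum ik_i=g\}$ is already known to be a basis of $\CC[a]_{n,g}$ (since \eqref{tilU} is obtained from the basis $\{m_{h_1,\ldots,h_n}\}$ by an invertible change of variables on $\Sigma_{n,g}$, and $\pi_{n,g}$ is a dualizing tensor by Proposition~\ref{tms}). Group this basis according to the value of $(k_2,\ldots,k_n)$: for each fixed $(k_2,\ldots,k_n)$ with $2k_2+\cdots+nk_n\leq g$, the chain $\tilde U_{0,k_2,\ldots,k_n},\tilde U_{1,k_2,\ldots,k_n},\ldots,\tilde U_{g-2k_2-\cdots-nk_n,k_2,\ldots,k_n}$ is mapped down by $\D$ according to part (1), with the bottom element annihilated. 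Hence, on the direct sum decomposition of $\CC[a]_{n,g}$ along these chains, $\D$ acts as a shift, and its kernel has basis exactly the bottom-of-chain elements $U_{k_2,\ldots,k_n}=\tilde U_{0,k_2,\ldots,k_n}$ with $2k_2+\cdots+nk_n=g$. Since $S_{n,g}=\ker(\D)\cap\CC[a]_{n,g}$ by Theorem~\ref{diff}, this gives the desired basis.

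The proof is essentially a bookkeeping argument, and the only real obstacle is verifying the comparison on a legitimate basis of $\Sigma_{n,g}$, namely making sure that the monomials in the $e_i$ are linearly independent — which is the classical fact that $e_1,\ldots,e_n$ are algebraically independent generators of $\CC[\lambda_1,\ldots,\lambda_n]^{\Sn}$. Finally, the closing integrality remark follows because the transition matrices $(\alpha_{h_1,\ldots,h_n,k_1,\ldots,k_n})$ between the monomial symmetric functions and the products $e_1^{k_1}\cdots e_n^{k_n}$ have integer entries (a classical identity in the ring of symmetric functions), so $\tilde U_{k_1,\ldots,k_n}\in\ZZ[a_0,a_1,\ldots]$, and the entire argument takes place over $\ZZ$.
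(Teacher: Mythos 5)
Your proof is correct and follows essentially the same route as the paper: the paper's argument is precisely the coefficient comparison in the expansion of $\D\pi_{n,g}=e_1\pi_{n,g-1}$ in the basis $e_1^{k_1}\cdots e_n^{k_n}$ of $\Sigma_{n,g}$, using that the $\tilde U_{k_1,\ldots,k_n}$ form a basis of $\CC[a]_{n,g}$ and that $\D$ sends basis elements to distinct basis elements or to zero. (Only a phrasing quibble: each ``chain'' meets a fixed $\CC[a]_{n,g}$ in a single element, since $\D$ lowers the weight; but the conclusion you draw about $\ker\D\cap\CC[a]_{n,g}$ is exactly right.)
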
\label{uinv}
In an alternative way  we can impose the relation
$\sum_{r=1}^n\lambda_r=0$ and denote by $\bar \lambda_r$ the class of $\lambda_r$ modulo $ \sum_{r=1}^n\lambda_r=0$. Denote by $\bar \Sigma_n$  the algebra of symmetric functions in the 
 $\bar \lambda_r$ which is a polynomial algebra over the elements $e_2(\bar \lambda),\ldots,e_n(\bar\lambda)$.  The space  of symmetric functions of degree $g$ in $\bar\lambda_{1},\ldots,\bar\lambda_{n}$,
$$
\bar\Sigma_{n,g}=(\CC[\lambda_{1},\ldots,\lambda_{n}]^{\Sn}/(\lambda_{1}+\cdots+\lambda_{n}))_{g} = \CC[\bar\lambda_1,\ldots,\bar\lambda_n]_{g}^{\Sn}
$$ 
has as basis the monomials of weight $g$ in the  elements $\bar e_2:=e_2(\bar \lambda),\ldots,\bar e_n:=e_n(\bar\lambda)$. This implies the following result.

\begin{proposition}\label{ssee}
\be
\item
The potenziante 
$\bar\pi_{n,g}( \bar\lambda;a)\in \bar\Sigma_{n,g}\otimes \CC[a]_{g}$ has the form
\begin{equation}\label{redpo}
\bar\pi_{n,g}( \bar\lambda_1,\ldots,\bar\lambda_n;a_0,a_1,\ldots,a_g)=
\sum_{\substack{k_2,\ldots,k_n\geq 0\\  \sum ik_i=g}} 
\bar e_2^{k_2}\dots\bar e_n^{k_n} U_{k_2,\ldots,k_n}.
\end{equation}
\item
In particular, $\bar\pi_{n,g}(\bar\lambda,a)$ is a dualizing tensor between the space $\bar\Sigma_{n,g}$  of symmetric functions of degree $g$ in $ \bar\lambda_1,\ldots,\bar\lambda_n$  and the space $S_{n,g}$ of $U$-invariants of degree $n$ and weight $g$. 
\item
The elements $U_{k_{2},\ldots,k_{n}}=\tilde U_{0,k_2,\ldots,k_n}$ of formula
\eqref{tilU} form a basis of the space $S_{n,g}\subset \CC[a_{0},\ldots,a_{n}]$ of $U$-invariants  of degree $n$ and weight $g$,  dual to the basis $\bar e_2^{k_2}\dots\bar e_n^{k_n}$  of  $\bar\Sigma_{n,g}$.
 \ee
\end{proposition}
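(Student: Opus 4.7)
The plan is to apply Proposition~\ref{bass} to the ideal $J=(e_1)\subset\CC[\lambda_1,\ldots,\lambda_n]$ generated by $e_1=\lambda_1+\cdots+\lambda_n$, so that passing to the quotient realizes $\bar\Sigma_{n,g}=\Sigma_{n,g}/(\Sigma_{n,g}\cap J)$.

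First I would check that $\Sigma_{n,g}\cap (e_1)=e_1\cdot\Sigma_{n,g-1}$. If $p=e_1q\in\Sigma_{n,g}$, then applying any $\sigma\in\Sn$ gives $e_1q=e_1\sigma(q)$, and since $e_1$ is not a zero-divisor in $\CC[\lambda_1,\ldots,\lambda_n]$, we get $\sigma(q)=q$, so $q$ is symmetric of degree $g-1$. Consequently $\bar\Sigma_{n,g}$ is the degree-$g$ component of the polynomial ring $\CC[\bar e_2,\ldots,\bar e_n]$, with basis of monomials $\bar e_2^{k_2}\cdots\bar e_n^{k_n}$, $\sum ik_i=g$. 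Statement~(1) is then immediate: reducing the expansion $\pi_{n,g}=\sum e_1^{k_1}\cdots e_n^{k_n}\tilde U_{k_1,\ldots,k_n}$ modulo $e_1$ kills every term with $k_1>0$, leaving precisely the stated formula with coefficients $\tilde U_{0,k_2,\ldots,k_n}=U_{k_2,\ldots,k_n}$.

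For (2) and (3), the key input is Remark~\ref{tras}, i.e.\ the remarkable formula~\eqref{pot3}, which says that under the duality defined by $\pi_{n,g}$ the operator of multiplication by $e_1\colon\Sigma_{n,g-1}\to\Sigma_{n,g}$ is the transpose of $\D\colon\CC[a]_{n,g}\to\CC[a]_{n,g-1}$. Therefore the orthogonal of $e_1\cdot\Sigma_{n,g-1}$ in $\CC[a]_{n,g}$ is $\ker(\D|_{\CC[a]_{n,g}})=S_{n,g}$ by Theorem~\ref{diff}. Applying Proposition~\ref{bass} to $J=(e_1)$ together with the basis $\bar e_2^{k_2}\cdots\bar e_n^{k_n}$ of $\bar\Sigma_{n,g}$ yields both that the $U_{k_2,\ldots,k_n}$ form the dual basis of $S_{n,g}$ and that $\bar\pi_{n,g}$ is the dualizing tensor between $\bar\Sigma_{n,g}$ and $S_{n,g}$.

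The only non-bookkeeping step is the identification of the orthogonal complement of $e_1\cdot\Sigma_{n,g-1}$ with $S_{n,g}$; this is not really an obstacle, since it is already packaged in the transpose relation~\eqref{pot3}. Everything else is a matter of reading off the expansion modulo $e_1$ and invoking Proposition~\ref{bass}.
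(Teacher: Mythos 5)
Your proof is correct and takes essentially the same route as the paper: part (1) is in both cases the reduction of the expansion $\pi_{n,g}=\sum e_1^{k_1}\cdots e_n^{k_n}\tilde U_{k_1,\ldots,k_n}$ modulo $e_1$, and everything rests on the formula $\D\pi_{n,g}=e_1\pi_{n,g-1}$. The only difference is one of packaging: the paper obtains the identification of $S_{n,g}$ with the span of the $U_{k_2,\ldots,k_n}$ from the explicit action $\D\tilde U_{k_1,\ldots,k_n}=\tilde U_{k_1-1,k_2,\ldots,k_n}$ of Corollary~\ref{uinv}, whereas you phrase the same computation dually as $(e_1\Sigma_{n,g-1})^{\perp}=\ker\D$ via Proposition~\ref{bass} and Remark~\ref{tras}; your check that $\Sigma_{n,g}\cap(e_1)=e_1\Sigma_{n,g-1}$ fills in a point the paper leaves implicit.
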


As a corollary we have
\begin{equation*}
\sum_{g=0}^
\infty\dim(S_{n,g})x^g=\frac{1}{(1-x^{2})(1-x^{3})\cdots (1-x^{n})}\ .
\end{equation*}

\begin{remark}\label{upc}
\name{Cayley} and \name{MacMahon} use the word {\em non unitariants}  for various symmetric functions in the differences of the roots, due to the fact that they are indexed by partitions with no part of size 1. 
\end{remark}

As a consequence of the proposition above and of Proposition~\ref{LaD} we see that the quotients of $\bar\Sigma_{n,g}$ correspond to subspaces of $S_{n,g}$.
So our final task is to identify the subspace $O_{n,g}$ of $\bar\Sigma_{n,g}$  orthogonal to the subspace   of decomposable elements of $S_{n,g}$ and from that a choice of perpetuants.

\ps
\subsection{Decomposable \texorpdfstring{$U$}{U}-invariants} 
For a given $h \in \NN$, $1\leq h <n$ we have \eqref{disb}:
$$
(\lambda_{1}\alpha_{1}+\cdots+\lambda_{n}\alpha_{n})^{[g]} = 
\sum_{j=0}^{g} (\lambda_{1}\alpha_{1}+\cdots+\lambda_{h}\alpha_{h})^{[j]} (\lambda_{h+1}\alpha_{h+1}+\cdots+\lambda_{n}\alpha_{n})^{[g-j]}
$$
which implies, by Remark~\ref{hom}, the following decomposition of the potenziante:
\begin{equation}\label{depo}
\pi_{n,g}(\lambda_{1},\ldots,\lambda_{n};a) =
\sum_{j=0}^{g} \pi_{h,j}(\lambda_{1},\ldots,\lambda_{h};a)\cdot
\pi_{n-h,g-j}(\lambda_{h+1},\ldots,\lambda_{n};a).
\end{equation}
Consider the ideal $J_h \subset \CC[\lambda_1,\ldots,\lambda_n]$  generated by the two linear elements $\lambda_1+ \cdots+\lambda_h$ and 
$\lambda_{h+1}+ \cdots+\lambda_n$ (or the ideal $\bar J_h\subset \CC[\bar\lambda_1,\ldots,\bar\lambda_n]$  generated by $\bar\lambda_1+ \cdots+\bar\lambda_h$). Then
\begin{multline*}
\CC[\lambda_{1},\ldots,\lambda_{n}]/J_{h} = \CC[\bar\lambda_1,\ldots,\bar\lambda_n]/\bar J_{h} = \\
\CC[\lambda_{1},\ldots,\lambda_{h}]/(\lambda_{1}+\cdots+\lambda_{h}) \otimes_{\CC}
\CC[\lambda_{h+1},\ldots,\lambda_{n}]/(\lambda_{h+1}+\cdots+\lambda_{n}),
\end{multline*}
and the image of $\CC[\lambda_{1},\ldots,\lambda_{n}]^{\Sn}$ is contained in 
\begin{equation*}
\CC[\lambda_{1},\ldots,\lambda_{h}]^{\SSS_{k}}/(\lambda_{1}+\cdots+\lambda_{h}) \otimes_{\CC}
\CC[\lambda_{h+1},\ldots,\lambda_{n}]^{\SSS_{n-k}}/(\lambda_{h+1}+\cdots+\lambda_{n}).
\end{equation*}
Denote by $T_{n,g,h}$ the subspace  of this tensor product, formed by  homogeneous elements of degree $g$, and let
$\bar\Sigma_{n,g,h} \subseteq T_{n,g,h}$ be the image of 
$\bar\Sigma_{n,g}\subset \CC[\bar\lambda_1,\ldots,\bar\lambda_n]$.
Let us  write $\tilde\lambda_i$ for the class of $\bar\lambda_i$  modulo  $\bar J_h$, and 
consider the image $\bar\pi_{n,g,h}$ of $\bar \pi_{n,g}$  modulo $\bar J_h$.  We get from Formula~\eqref{depo}:
\begin{equation}\label{svil1}
\bar\pi_{n,g,h}
=\sum_{j=0}^g\pi_{h,j}(\tilde\lambda_1,\ldots,\tilde\lambda_h;a)\cdot
\pi_{n-h,g-j}(\tilde\lambda_{h+1},\ldots,\tilde\lambda_n;a)
\end{equation}  
as an element from $\bar\Sigma_{n,g,h}\otimes S_{n,g}\subset T_{n,g,h}\otimes S_{n,g}$.

\begin{lemma}\label{decomp} With the notation above we have the following results.
\be
\item[(a)]
If $(B_{i})_{i\in I}$ is a basis of  $\bar\Sigma_{n,g,h}$ and
$$
\bar\pi_{n,g,h}= \sum_{i\in I}B_{i}\otimes A_{i},
$$
then the $A_{i}$ form a basis of the  $U$-invariants decomposable as:
$$S_{n,g,h}:=\sum_{j}^{g} S_{h,j}\cdot S_{n-h,g-j}.$$
In particular, the potenziante $\bar\pi_{n,g,h}(\tilde\lambda;a) \in \bar\Sigma_{n,g,h}\otimes S_{n,g}$ is a dualizing tensor between $\bar\Sigma_{n,g,h}$ and the space $S_{n,g,h}\subset  S_{n,g}$.
\item[(b)]
In the correspondence between subspaces of $S_{n,g}$ and of $\bar\Sigma_{n,g}$  given by the potenziante $\bar\pi_{n,g}$ (see Proposition~\ref{ssee}(2)) the subspace $S_{n,g,h}$ is the orthogonal to $\bar\Sigma_{n,g}\cap \bar J_{h}$.
\ee
\end{lemma}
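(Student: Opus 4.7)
The plan is to combine Proposition~\ref{ssee}(2), applied separately to each of the two groups of variables, with Proposition~\ref{bass} applied to the ideal $\bar J_{h}$.

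First, in the quotient modulo $\bar J_{h}$ the images $\tilde\lambda_{1},\ldots,\tilde\lambda_{h}$ satisfy $\tilde\lambda_{1}+\cdots+\tilde\lambda_{h}=0$ by construction of $\bar J_{h}$; and since $\bar\lambda_{1}+\cdots+\bar\lambda_{n}=0$ already holds in the $\bar\lambda$-algebra, we also get $\tilde\lambda_{h+1}+\cdots+\tilde\lambda_{n}=0$. Thus Proposition~\ref{ssee}(2) applies to each set of variables separately: for each $j$, the factor $\pi_{h,j}(\tilde\lambda_{1},\ldots,\tilde\lambda_{h};a)$ is a dualizing tensor in $\bar\Sigma_{h,j}\otimes S_{h,j}$, and $\pi_{n-h,g-j}(\tilde\lambda_{h+1},\ldots,\tilde\lambda_{n};a)$ is a dualizing tensor in $\bar\Sigma_{n-h,g-j}\otimes S_{n-h,g-j}$.

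Next, expanding each factor in its basis and multiplying the $a$-sides in $\CC[a]$, formula~\eqref{svil1} writes $\bar\pi_{n,g,h}$ as an element of
\[\bigoplus_{j=0}^{g}(\bar\Sigma_{h,j}\otimes\bar\Sigma_{n-h,g-j})\otimes(S_{h,j}\cdot S_{n-h,g-j})\;\subseteq\; T_{n,g,h}\otimes S_{n,g,h}.\]
Using the bidegree decomposition $T_{n,g,h}=\bigoplus_{j}\bar\Sigma_{h,j}\otimes\bar\Sigma_{n-h,g-j}$ together with Remark~\ref{sdua}(3), the image on the $a$-side of the map $\pi_{2}$ associated with $\bar\pi_{n,g,h}$, viewed as a tensor in $T_{n,g,h}\otimes S_{n,g}$, is exactly $\sum_{j}S_{h,j}\cdot S_{n-h,g-j}=S_{n,g,h}$.

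Now I would invoke Remark~\ref{sdua}(4): since $\bar\pi_{n,g,h}$ already lies in $\bar\Sigma_{n,g,h}\otimes S_{n,g}\subseteq T_{n,g,h}\otimes S_{n,g}$, the image of $\pi_{2}$ is unchanged when computed from $\bar\Sigma_{n,g,h}$ instead of $T_{n,g,h}$. Hence, for any basis $(B_{i})_{i\in I}$ of $\bar\Sigma_{n,g,h}$, writing $\bar\pi_{n,g,h}=\sum_{i}B_{i}\otimes A_{i}$, the $(A_{i})$ \emph{span} $S_{n,g,h}$. On the other hand, applying Proposition~\ref{bass} with $J=\bar J_{h}$ shows that these same $(A_{i})$ are a basis of the subspace of $S_{n,g}$ orthogonal to $\bar\Sigma_{n,g}\cap\bar J_{h}$ under $\bar\pi_{n,g}$; in particular they are linearly independent. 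Combining the two, the $(A_{i})$ form a basis of $S_{n,g,h}$, proving~(a) and showing that $\bar\pi_{n,g,h}$ is a dualizing tensor for $\bar\Sigma_{n,g,h}$ and $S_{n,g,h}$. Statement~(b) is then immediate: the same $(A_{i})$ form a basis of both $S_{n,g,h}$ and $(\bar\Sigma_{n,g}\cap\bar J_{h})^{\perp}$, so these two subspaces coincide.

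The main obstacle I expect is the bookkeeping in the second step: one must verify that the image on the $a$-side of the product of the two dualizing tensors $\pi_{h,j}$ and $\pi_{n-h,g-j}$ is the algebra product $S_{h,j}\cdot S_{n-h,g-j}$ and not merely their tensor product, and that the bigrading direct-sum structure of $T_{n,g,h}$ separates the contributions of different $j$ so that summing them yields $S_{n,g,h}$ with no unwanted collapse.
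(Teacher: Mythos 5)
Your proposal is correct and follows essentially the same route as the paper: both expand $\bar\pi_{n,g,h}$ via Formula~\eqref{svil1} and Formula~\eqref{redpo}, use the linear independence of the monomials ${e_{2}'}^{k_{2}'}\cdots\otimes{e_{2}''}^{k_{2}''}\cdots$ in $T_{n,g,h}$ together with Remark~\ref{sdua}(3)--(4) to identify the image on the $a$-side with $S_{n,g,h}$, and then obtain linear independence of the $A_{i}$ (and part (b)) from the duality of $\bar\pi_{n,g}$ via Proposition~\ref{bass}. The only cosmetic difference is that you route the linear-independence step explicitly through Proposition~\ref{bass}, where the paper cites Proposition~\ref{ssee}(2) and Remark~\ref{sdua}; these amount to the same argument.
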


\begin{proof}
(a) Developing Formula~\eqref{svil1} for $\bar\pi_{n,g,h}$ by using Formula~\eqref{redpo} for all terms we get
\[
\bar\pi_{n,g,h} =  \sum_{j=0}^{g}\sum_{\substack{k_{2}',\dots,k_{h}',k_{2}'',\ldots, k_{n-h}''\\
\sum_{i} ik_{i}' = j, \  \sum_{i} ik_{i}'' = g-j}}\!\!\!\!
({e_{2}'}^{k_{2}'}\cdots{e_{h}'}^{k_{h}'} \otimes {e_{2}''}^{k_{2}''}\cdots{e_{n-h}''}^{k_{n-h}''})
U_{k_{2}'\ldots,k_{h}'} U_{k_{2}'',\ldots,k_{n-h}''}
\]
where $e_{i}':=e_{i}(\tilde\lambda_{1},\ldots,\tilde\lambda_{h})$ and $e_{i}'':=e_{i}(\tilde\lambda_{h+1},\ldots,\tilde\lambda_{n})$. Now the elements $U_{k_{2},\ldots,k_{h}}$ span the space $S_{h,j}$ for $j:=\sum_{i} i k_{i}$, it follows that the elements $ U_{k_{2}'\ldots,k_{h}'} U_{k_{2}'',\ldots,k_{n-h}''}$  span $S_{n,g,h}=\sum_{j=0}^{g} S_{h,j}\cdot S_{n-h,g-j}$. 
Since the tensor products ${e_{2}'}^{k_{2}'}\cdots{e_{h}'}^{k_{h}'} \otimes {e_{2}''}^{k_{2}''}\cdots{e_{n-h}''}^{k_{n-h}''}$ are linearly independent it follows that the $A_i$ also span $S_{n,g,h}$   using parts (2) and (3) of Remark \ref{sdua}.

It follows from Proposition~\ref{ssee}(2) and Remark~\ref{sdua} that $\bar\pi_{n,g,h}$ is a dualizing tensor between  $\bar\Sigma_{n,g,h}$, the image of $\bar\Sigma_{n,g}$,  and a subspace $S'_{n,g,h}\subseteq  S_{n,g}$. By the first part   $S'_{n,g,h}= S_{n,g,h}$ and the $A_i$ are also linearly independent.
\ps
(b) By Remark~\ref{sdua}  (1) or Proposition \ref{bass} this is clear, since $\bar\Sigma_{n,g}\cap \bar J_{h}$ is the kernel of the surjective map $\bar\Sigma_{n,g}\to \bar\Sigma_{n,g,h}$.
\end{proof}

\ps
\subsection{The symmetric functions \texorpdfstring{$p_{h}$ and $q_{n}$}{ph and qh}}\label{ph-and-qn.subsec}
The space $\bar\Sigma_{n,g}\cap \bar J_{h}$ consists of the symmetric functions in $\bar\lambda_{1},\ldots,\bar\lambda_{n}$ of degree $n$ which are divisible by $\bar\lambda_1+ \cdots+\bar\lambda_h$.   Such  a symmetric function is also divisible by the elements $\bar\lambda_T:=\sum_{i\in T}\bar\lambda_i $  for all subsets $T\subset\{1,\ldots,n\}$ of cardinality $|T|=h$.

For $h<\frac n2$ consider the symmetric function  
\begin{equation*}\label{ipi}
p_h:=\prod_{1\leq j_1<j_2<\ldots<j_h\leq n}(\bar\lambda_{j_1}+\bar \lambda_{j_2}+ \cdots+ \bar\lambda_{j_h})=\prod_{\substack{T\subset\{1,2,\ldots,n\} \\ |T|=h}}\bar\lambda_T
\end{equation*} 
of degree $\binom n h$.  It follows that $p_h$  is an irreducible element of $\bar\Sigma_{n}$ and that $\bar\Sigma_{n,g}\cap \bar J_{h}$ consists of  the multiples of $p_{h}$ of degree $g$.
When $n=2h$ is even and $h>1$, then $\bar\lambda_T=-\bar\lambda_{\{1,2,\ldots,n\}\setminus T}$. Therefore,  we define
\begin{equation*}
p_h:=\prod_{1 <j_2<\ldots<j_h\leq 2h}(\bar\lambda_{ 1}+\bar \lambda_{j_2}+ \cdots+ \bar\lambda_{j_h})
=\prod_{\substack{T\subset\{1,2,\ldots,n\} \\ |T|=h,\ 1\in T}}\bar\lambda_T.
\end{equation*} We claim that $p_h$ is   irreducible,   of degree  $\frac 12\binom {2h} h$ and still symmetric for $h>1$. In fact it is clearly symmetric with respect to the permutations which fix 1 so it is enough to see the symmetry under the transposition $(1,2)$.  This fixes all factors  for which $\lambda_2=2$;  as for the product $\Pi$ of the remaining factors $\bar\lambda_T,\ 1\in T,\ 2\notin T$  it replaces 1 with 2  and maps these set of factors bijectively to the set of factors associated to sets $T$ with $1\notin T,\ 2\in T$. 

For these sets   the map $T\mapsto \{1,2,\ldots,n\}\setminus T$ is a bijection  with the factors of $\Pi$.   By formula $\bar\lambda_T=-\bar\lambda_{\{1,2,\ldots,n\}\setminus T}$ the product of $\Pi$ is thus equal to $\epsilon \Pi$ with $\epsilon=(-1)^{|\Pi|}$. Now clearly $|\Pi|= \binom{2h-2}{h-1}=2\binom{2h-3}{h-2}$  is even.
 This proves the following lemma.

\begin{lemma}\label{priml} 
For $n>2$ and $h \leq \frac n2$ the space $\bar\Sigma_{n,g}\cap\bar J_{n}$ consists  of the elements of  $\bar\Sigma_{n,g}$ which are multiples of the symmetric function $p_h$.
\end{lemma}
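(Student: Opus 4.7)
The plan is to prove a double inclusion between $\bar\Sigma_{n,g}\cap\bar J_h$ and the ideal of multiples of $p_h$ inside $\bar\Sigma_n$. The containment of multiples of $p_h$ in $\bar J_h$ is immediate from the definition of $p_h$: the linear form $\bar\lambda_{\{1,\ldots,h\}} = \bar\lambda_1+\cdots+\bar\lambda_h$ is (possibly up to sign, in the case $n=2h$) one of the factors in the product defining $p_h$, and symmetry of $p_h$ was already established in the discussion preceding the statement. So I focus on the reverse inclusion, which is the content.

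For the reverse inclusion, I would work in $\bar R := \CC[\bar\lambda_1,\ldots,\bar\lambda_n] = \CC[\lambda_1,\ldots,\lambda_n]/(\lambda_1+\cdots+\lambda_n)$. Solving $\bar\lambda_n = -(\bar\lambda_1+\cdots+\bar\lambda_{n-1})$ presents $\bar R$ as a polynomial ring in $n-1$ independent variables, hence a UFD. Since $h<n$, the linear form $\bar\lambda_1+\cdots+\bar\lambda_h$ is nonzero in $\bar R$ and therefore prime. If $f\in\bar\Sigma_{n,g}\cap\bar J_h$, then this linear form divides $f$; applying an arbitrary $\sigma\in\Sn$ and using $\sigma(f)=f$, every translate $\bar\lambda_T := \sum_{i\in T}\bar\lambda_i$ with $|T|=h$ divides $f$ as well.

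To conclude, I need to combine these divisibilities, which requires checking that the relevant linear forms are pairwise non-proportional in $\bar R$. Because $\bar\lambda_T = -\bar\lambda_{T^c}$ in $\bar R$, two forms $\bar\lambda_T$ and $\bar\lambda_{T'}$ of degree one are proportional precisely when $T'=T$ or $T'=T^c$. When $h<n/2$, the complementary case never arises among $h$-subsets, so the $\binom{n}{h}$ linear forms $\bar\lambda_T$ are pairwise coprime primes whose product equals $p_h$; the UFD property then yields $p_h\mid f$. When $h=n/2$, the $h$-subsets pair up into $\tfrac12\binom{2h}{h}$ complementary pairs, and a transversal (those $T$ with $1\in T$) provides pairwise coprime primes whose product is precisely the $p_h$ defined in the paper; the same UFD argument gives $p_h\mid f$.

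The principal obstacle is bookkeeping in the borderline case $h=n/2$, where the naive product over all $h$-subsets would vanish or duplicate factors. This is already handled by the irreducibility and symmetry verification for $p_h$ that immediately precedes the lemma, so once that is in hand the coprimality argument closes without further work.
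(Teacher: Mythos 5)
Your proof is correct and follows essentially the same route as the paper: the paper likewise observes that a symmetric element of $\bar J_h$ is divisible by every translate $\bar\lambda_T$ with $|T|=h$, handles the complementary-pair degeneracy when $n=2h$ by taking the transversal $1\in T$ (checking symmetry and parity of the sign), and concludes that such an element is a multiple of $p_h$. You merely make explicit the unique-factorization and pairwise-coprimality step that the paper leaves implicit, which is a reasonable sharpening rather than a different argument.
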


Let us define the following symmetric function
$$
q_{n}:=p_{1}p_{2}\cdots p_{m} \text{ where } m:=\left\lfloor\frac{n}{2}\right\rfloor.
$$
We claim that $\deg q_{n}= 2^{n-1}-1$. In fact, 
\begin{align*}
\sum_{j=1}^{\frac {n-1}2}\binom nj =\frac 12\sum_{j=1}^{ n-1 }\binom nj &= \frac 12(2^n-2)=2^{n-1}-1 \qquad
\text{if $n$ is odd,}\\
\sum_{j=1}^{h-1}\binom {2h}j+ \frac 12 \binom {2h} h &= \frac 12 2^{2h}-1=2^{n-1}-1 \qquad
\text{if $n=2h$ is even.}
\end{align*}

\begin{theorem}\label{str}
Let us  now  assume $n>2$.
With respect to the potenziante $\bar\pi_{n,g}\in \bar\Sigma_{n,g}\otimes S_{n,g}$
the space of decomposable $U$-invariants of degree $n$ and weight $g$ is the orthogonal to $O_{n,g}:=\bar\Sigma_{n,g}\cap (q_{n})$. It has as basis   
the coefficients of the potenziante in the quotient algebra  $\CC[\bar\lambda_1,\ldots,\bar\lambda_n]/(q_{n})$.
\end{theorem}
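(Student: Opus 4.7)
The plan is to compute the orthogonal of the space of decomposable $U$-invariants $\sum_{h=1}^{n-1}S_{n,g,h}\subseteq S_{n,g}$ under the potenziante duality and identify it with $O_{n,g}$. Both claims of the theorem then follow: the first by double orthogonality in the finite-dimensional duality of Proposition~\ref{ssee}, and the second by a direct application of Proposition~\ref{bass} to the ideal $(q_n)$.

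First I would exploit the obvious symmetry $S_{n,g,h}=S_{n,g,n-h}$ (obtained by reindexing $j\leftrightarrow g-j$) to restrict the sum to $1\leq h\leq \lfloor n/2\rfloor$. Because in a finite-dimensional duality orthogonality converts sums into intersections, Lemma~\ref{decomp}(b) yields
\[
\Bigl(\sum_{h=1}^{\lfloor n/2\rfloor}S_{n,g,h}\Bigr)^{\perp}
= \bigcap_{h=1}^{\lfloor n/2\rfloor}\bigl(\bar\Sigma_{n,g}\cap\bar J_{h}\bigr),
\]
and Lemma~\ref{priml} rewrites each $\bar\Sigma_{n,g}\cap\bar J_{h}$ as the subspace of $\bar\Sigma_{n,g}$ consisting of multiples of $p_{h}$. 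The remaining task is purely algebraic: identify the simultaneous multiples of $p_{1},\ldots,p_{m}$ (with $m:=\lfloor n/2\rfloor$) inside $\bar\Sigma_{n,g}$ with $O_{n,g}=\bar\Sigma_{n,g}\cap(q_{n})$.

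The heart of the argument is a coprimality claim in the ambient UFD
\[
\CC[\bar\lambda_{1},\ldots,\bar\lambda_{n}]\cong \CC[\lambda_{1},\ldots,\lambda_{n}]/(\lambda_{1}+\cdots+\lambda_{n}),
\]
a polynomial ring in $n-1$ variables. Each $p_{h}$ is by construction a product of distinct linear forms $\bar\lambda_{T}=\sum_{i\in T}\bar\lambda_{i}$ with $|T|=h$ (taking one representative per antipodal pair $T\sim T^{c}$ when $h=n/2$). A short linear-algebra check shows that $\bar\lambda_{T}$ and $\bar\lambda_{T'}$ are proportional in the quotient precisely when $T=T'$ or $T=T'^{c}$; hence for distinct $h,h'\leq m$ (so that $h+h'<n$ unless $h=h'=n/2$), the irreducible linear factors of $p_{h}$ and $p_{h'}$ are pairwise non-proportional. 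Thus $p_{1},\ldots,p_{m}$ are pairwise coprime in the UFD, so $\bigcap_{h}(p_{h})=(p_{1}\cdots p_{m})=(q_{n})$. To descend to $\bar\Sigma_{n}$, note that if $f\in\bar\Sigma_{n,g}$ and $f=q_{n}g$ in the UFD, then since $q_{n}$ is symmetric for $n>2$ (as established in the discussion of Lemma~\ref{priml}) and $f$ is symmetric, $g$ must also be symmetric, so $\bigcap_{h}\bigl(\bar\Sigma_{n,g}\cap(p_{h})\bigr)=\bar\Sigma_{n,g}\cap(q_{n})=O_{n,g}$.

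Combining these steps gives $\bigl(\sum_{h}S_{n,g,h}\bigr)^{\perp}=O_{n,g}$, and hence $\sum_{h}S_{n,g,h}=O_{n,g}^{\perp}$ by double orthogonality in the finite-dimensional pairing. The basis claim is then an immediate application of Proposition~\ref{bass} with $J=(q_{n})$: choosing any basis $(B_{i})$ of $\bar\Sigma_{n,g}/O_{n,g}$ and writing the reduction of $\bar\pi_{n,g}$ in $\bar\Sigma_{n,g}/O_{n,g}\otimes S_{n,g}$ as $\sum_{i}B_{i}\otimes A_{i}$, the coefficients $A_{i}$ form a basis of $O_{n,g}^{\perp}$, i.e.\ of the decomposable $U$-invariants. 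The step I expect to be most delicate is the coprimality of the $p_{h}$'s, with the subtlety concentrated in the even-$n$, $h=n/2$ boundary case where $p_{n/2}$ is defined only as a product over subsets containing $1$ precisely so as to preserve both irreducibility and symmetry.
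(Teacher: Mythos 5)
Your proof is correct and follows essentially the same route as the paper: Lemma~\ref{decomp}(b) plus the sum-to-intersection property of orthogonality reduces everything to showing $\bigcap_{h}\bigl(\bar\Sigma_{n,g}\cap\bar J_{h}\bigr)=\bar\Sigma_{n,g}\cap(q_{n})$, Lemma~\ref{priml} identifies each term with the multiples of $p_{h}$, and the basis claim comes from Proposition~\ref{bass}. The one local difference is that where the paper simply invokes irreducibility and distinctness of the $p_{h}$ in the UFD $\bar\Sigma_{n}=\CC[\bar e_{2},\ldots,\bar e_{n}]$, you instead establish pairwise coprimality in the ambient polynomial ring from the non-proportionality of the linear factors $\bar\lambda_{T}$ and then descend to the symmetric subalgebra by noting that $f/q_{n}$ is again symmetric --- a slightly more self-contained justification of the same step.
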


\begin{proof} 
This follows from the following argument.  In the correspondence between subspaces of $\bar\Sigma_{n,g}$ and of $S_{n,g}$ given by the potenziante $\pi_{n,g}$ (Proposition~\ref{ssee}(2)) we have seen in Lemma~\ref{decomp} that the space of  $U$-invariants decomposed as a sum of products of $U$-invariants of degree $h$ and $n-h$  is the orthogonal of the subspace of multiples of $p_h$. Hence the entire space of decomposable $U$-invariants is the orthogonal of the intersection of all the subspaces of multiples of the various $p_h$.
But these symmetric functions are all irreducible in the algebra of symmetric functions in $\bar \lambda_1,\ldots,\bar \lambda_n$, and distinct, so that this intersection is exactly  the space of multiples of $q_n$.
\end{proof}
This we believe is the main step in \name{Stroh}'s proof.
\begin{corollary}\label{iPeP}
If $M_{n,g}\subset \bar\Sigma_{n,g}$ is a complement to $O_{n,g}:=\bar\Sigma_{n,g}\cap (q_{n})$, then the orthogonal of $M_{n,g}$ in $ S_{n,g}$ is a space of perpetuants (Definition \ref{spapp}) of degree $n$ and weight $g$.
\end{corollary}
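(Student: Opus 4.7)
The plan is to combine Theorem~\ref{str} with an elementary duality fact. First I would identify the space of decomposable $U$-invariants of bidegree $(n,g)$, namely $D_{n,g}:=\sum_{h=1}^{n-1}S_{n,g,h}$ from Theorem~\ref{str}, with the component $(I^2)_{n,g}$ of the square of the maximal homogeneous ideal. Any element of $(I^2)_{n,g}$ is a sum of products $fg$ of positive-degree homogeneous $U$-invariants whose degrees sum to $n$, so it lies in some $S_{n,g,h}$; the reverse inclusion is immediate. Hence Theorem~\ref{str} asserts exactly that $(I^2)_{n,g}=O_{n,g}^{\perp}$ under the duality induced by the potenziante $\bar\pi_{n,g}$.

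Next I would invoke the following general fact about a dualizing tensor $\pi\in U\otimes W$ between finite-dimensional spaces: a direct sum decomposition $U=A\oplus B$ produces a direct sum decomposition $W=A^{\perp}\oplus B^{\perp}$. Indeed, $\pi$ yields an isomorphism $\Phi\colon U^{\vee}\simto W$ which carries the annihilator $\mathrm{Ann}(A)\subset U^{\vee}$ onto $A^{\perp}$ (by the very definition of the latter as the image of $(U/A)^{\vee}=\mathrm{Ann}(A)$), and similarly for $B$. The standard decomposition $U^{\vee}=\mathrm{Ann}(A)\oplus\mathrm{Ann}(B)$ then gives $W=A^{\perp}\oplus B^{\perp}$ after applying $\Phi$.

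Applying this to $\bar\pi_{n,g}\in\bar\Sigma_{n,g}\otimes S_{n,g}$ with $A=O_{n,g}$ and $B=M_{n,g}$, I obtain $S_{n,g}=O_{n,g}^{\perp}\oplus M_{n,g}^{\perp}=(I^2)_{n,g}\oplus M_{n,g}^{\perp}$. Thus $M_{n,g}^{\perp}$ is a complement to $(I^2)_{n,g}$ in $S_{n,g}$, and summing over all bidegrees yields a bigraded complement to $I^2$ in $I$, i.e.\ a space of perpetuants in the sense of Definition~\ref{spapp}. Since Theorem~\ref{str} has already done the substantive work, I expect no real obstacle; only the routine verification that orthogonality respects direct-sum decompositions is required.
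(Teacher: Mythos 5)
Your proposal is correct and follows essentially the same route as the paper, whose proof is the one-line observation that, by duality, the orthogonal of $M_{n,g}$ is a complement of the orthogonal of $O_{n,g}$, the latter being the space of decomposable elements by Theorem~\ref{str}. You merely spell out the two ingredients the paper leaves implicit: the identification of the decomposables with $(I^2)_{n,g}$, and the fact that a dualizing tensor carries a direct-sum decomposition of one factor to a direct-sum decomposition of the other.
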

\begin{proof}
This follows by duality. The orthogonal of $M_{n,g}$ is a complement of the orthogonal of $O_{n,g}$, which is the space of decomposable elements.
\end{proof}

\ps
\subsection{Generating functions and proof of  Stroh's Theorem}
Define
$$
N_{n,g}:=\#\{2\mu_2+3\mu_3+\cdots+n\mu_n  =g\},
$$
the number of ways of partitioning $g$ with numbers between 2 and $n$. We have 
$$
\sum_{g=0}^{\infty} N_{n,g}\, x^g = \frac{1}{(1-x^{2})\cdots(1-x^{n})}.
$$ 
The dimension of  the space  $\bar\Sigma_{n,g}$ is clearly $N_{n,g}$, and the subspace of those divisible by an element of degree $i$ has dimension $N_{n,g-i}$ if $i\leq g$ and 0 otherwise. It follows that the space $O_{n,g} = (q_{n})\cap \bar\Sigma_{n,g}$ of multiples of $q_n$ has dimension   $N_{n,g- 2^{n-1}+1}$ if $g\geq 2^{n-1}-1$ and 0 otherwise. 
Now Corollary~\ref{iPeP} shows that this is the dimension of the perpetuants of degree $n>2$ and weight $g$, and we thus get for the generating function, in degree $n$:
\begin{equation*}
\sum_{g=2^{n-1}-1}^{\infty} N_{n,g- 2^{n-1}+1} \, x^g = (\sum_{k=0}^{\infty} N_{n,k} \, x^k)x^{2^{n-1}-1} = \frac{x^{2^{n-1}-1}}{(1-x^{2})\cdots(1-x^{n})}.
\end{equation*}
This proves the Main Theorem of \name{Stroh} provided we do the cases $n=1$ and $2$.
For $n=1$ we have $S_{1} = \CC a_{0}$, and so the only homogeneous perpetuant is clearly $a_0$. For $n=2$ the only decomposable elements are the multiples of $a_0^2$. We have
$$
E(\bar\lambda_1\alpha_1+\bar\lambda_2\alpha_2)^{[g]} =  
\bar\lambda_1^gE\left((\alpha_1-\alpha_2)^{[g]}\right)= 
\bar\lambda_1^g \, E\left(\sum_{j=0}^g \alpha_1 ^{[ j]}(-\alpha_2)^{[g-j]}\right), \text{ and }
$$
\begin{eqnarray*}
E\left(\sum_{j=0}^g \alpha_1 ^{[ j]}(-\alpha_2)^{[g-j]}\right)
&=& 
\sum_{j=0}^g   (-1)^{ g-j } a_ja_{g-j} = \\
&=&
\begin{cases}
0 & \text{if $g$ is odd,}\\
\sum_{j=0}^{h-1}   2(-1)^{  j } a_ja_{g-j} +(-1)^{  h}a_h^2
& \text{if $g=2h$ is even.}
\end{cases} 
\end{eqnarray*}
This shows that there is exactly one perpetuant of degree 2  in every even weight $>0$, and so the generating function is $x^2/(1-x^2)$ as claimed.

\ps
\section{A  basis of the perpetuants}
In the next paragraph   we construct an  explicit  basis for a space of perpetuants (Definition~\ref{spapp}).

\ps
\subsection{Leading exponents}
Using Corollary~\ref{iPeP}, we will now define a special basis in order to obtain a basis of the perpetuants, see Theorem~\ref{basis-perp} below. As before, we will work in the polynomial algebra 
$$
\CC[\lambda_{1},\ldots,\lambda_{n}]/(\lambda_{1}+\cdots+\lambda_{n}) = 
\CC[\bar\lambda_{1},\ldots,\bar\lambda_{n-1}]
$$ 
where $\blam_{i}$ is the image of $\lambda_{i}$. For $\br=(r_{1},\ldots,r_{n-1}) \in \NN^{n-1}$ we set $\blam^{\br}:=\blam_{1}^{r_{1}}\cdots \blam_{n-1}^{r_{n-1}}$,  so that any  $f \in \CC[\bar\lambda_{1},\ldots,\bar\lambda_{n-1}]$ can be written in the form  $f = \sum_{\text{\it finite}} c_{\br}\bar\lambda^{\br}$.

We use the usual lexicographic order $\leq$ on the exponents:
$$
(r_{1},\ldots,r_{n-1}) < (s_{1},\ldots,s_{n-1}) \ \iff \ 
r_{k}< s_{k} \text{ for } k := \min\{i \mid r_{i}\neq s_{i}\}.
$$

\begin{definition}
For a nonzero polynomial $f \in \CC[\bar\lambda_{1},\ldots,\bar\lambda_{n-1}]$, 
$f = \sum_{i} c_{\br}\bar\lambda^{\br}$,  the maximum $\br_{0}:=\max\{\br \mid c_{\br}\neq 0\}$ is called the 
{\it leading exponent} of $f$ and is denoted by $\lex(f)$. Furthermore,
$\lmon(f):= c_{\br_{0}}\blam^{\br_{0}}$ is called the {\it leading monomial} of $f$.
\end{definition}
\begin{remark}\label{mm}
For two polynomials $f,g$ we have $\lex(f\cdot g)=\lex(f)+\lex(g)$.
\end{remark}
As before, we denote by $\bae_{2},\ldots,\bae_{n} \in \Cblam$ the images of the elementary symmetric functions $e_{2},\ldots,e_{n} \in \CC[\lambda_{1},\ldots,\lambda_{n}]$.

\begin{lemma}
\be
\item
The leading exponent of $\bae^{\bh}:=\bae_{2}^{h_{2}}\cdots \bae_{n}^{h_{n}}$ is given by 
$$
\lex(\bae^{\bh})=(2(h_{2}+\cdots+h_{n}), h_{3}+\cdots+h_{n},\ldots,h_{n-1}+h_{n}, h_{n}) \in \NN^{n-1}.
$$
\item
The leading exponents of the monomials $\bae_{2}^{h_{2}}\cdots \bae_{n}^{h_{n}}$ are distinct and are formed by all sequences $(r_{1},\ldots,r_{n-1})$ with $r_{1}-2 r_{2}\in 2\NN$ and $r_{i}\geq r_{i+1}$.
\ee
\end{lemma}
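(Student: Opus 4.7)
The plan is to compute $\lex(\bae_k)$ for each single $k$, then invoke Remark~\ref{mm} (additivity of leading exponent under products) to sum these and conclude part (1). For part (2), I produce an explicit inverse to the map $\bh\mapsto\lex(\bae^{\bh})$ and characterise its image as the set of $\br$ whose preimage lies in $\NN^{n-1}$.

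To compute $\lex(\bae_k)$, I would split
$$
e_k(\lambda)=e_k(\lambda_1,\ldots,\lambda_{n-1})+\lambda_n\, e_{k-1}(\lambda_1,\ldots,\lambda_{n-1})
$$
(the first summand is $0$ if $k=n$) and then pass to $\Cblam$ via $\blam_n=-(\blam_1+\cdots+\blam_{n-1})$. Ordering monomials lexicographically with $\blam_1>\blam_2>\cdots>\blam_{n-1}$, the first summand contributes leading monomial $\blam_1\blam_2\cdots\blam_k$, while the second contributes $-\blam_1\cdot\blam_1\blam_2\cdots\blam_{k-1}=-\blam_1^2\blam_2\cdots\blam_{k-1}$. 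The latter dominates (strictly larger first coordinate), hence
$$
\lex(\bae_k)=(2,\underbrace{1,\ldots,1}_{k-2},\underbrace{0,\ldots,0}_{n-k}).
$$
By Remark~\ref{mm}, $\lex(\bae^{\bh})=\sum_{k=2}^n h_k\,\lex(\bae_k)$, and coordinate-wise summation yields $r_1=2(h_2+\cdots+h_n)$ and $r_i=h_{i+1}+\cdots+h_n$ for $2\leq i\leq n-1$, which is the formula of part (1).

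For part (2), the assignment $\bh\mapsto\br$ is triangular and invertible: setting $r_n:=0$ one recovers $h_n=r_{n-1}$, $h_k=r_{k-1}-r_k$ for $3\leq k\leq n-1$, and $h_2=(r_1-2r_2)/2$. Injectivity is immediate, and the image consists of those $\br\in\NN^{n-1}$ for which all recovered $h_k$ are non-negative integers. This is precisely $r_i\geq r_{i+1}$ for $2\leq i\leq n-2$ (the inequality $r_1\geq r_2$ then being automatic, since $r_1\geq 2r_2\geq r_2$) together with $r_1-2r_2\in 2\NN$, matching the stated characterization.

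The only real subtlety is the computation of $\lex(\bae_k)$: a naive guess would take the leading monomial from $e_k(\blam_1,\ldots,\blam_{n-1})$, but substituting $\blam_n=-(\blam_1+\cdots+\blam_{n-1})$ creates a monomial with $\blam_1^2$ that dominates in lex order. Once this observation is in hand, both parts of the lemma reduce to bookkeeping.
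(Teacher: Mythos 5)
Your proposal is correct and takes essentially the same route as the paper: you identify the contribution of $\lambda_n$ (i.e.\ the term $\blam_1\cdots\blam_{k-1}\blam_n$, which after substituting $\blam_n=-(\blam_1+\cdots+\blam_{n-1})$ yields $-\blam_1^2\blam_2\cdots\blam_{k-1}$) as the source of the leading monomial of $\bae_k$, sum leading exponents via Remark~\ref{mm}, and invert the resulting triangular map for part (2), exactly as the paper does. Your write-up is in fact slightly more explicit than the paper's about why the $\blam_1^2$-monomial dominates the squarefree one coming from $e_k(\blam_1,\ldots,\blam_{n-1})$, and about the redundancy of the inequality $r_1\geq r_2$.
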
\label{lex-of-e^h}

\begin{proof}
(1) 
The leading monomial of $\bae_{j}$ comes from the term
\[
\begin{split}
\blam_{1}\blam_{2}\cdots\blam_{j-1}\blam_{n} &=-\blam_{1}\blam_{2}\cdots\blam_{j-1}(\blam_{1}+\cdots+\blam_{n-1}) \\
&=-\blam_{1}^{2}\blam_{2}\cdots\blam_{j-1} + \text{lower terms}.
\end{split}
\]
Therefore we have 
$$
\lex(\bae_{2}) = (2,0,\ldots,0), \ \lex(\bae_{3}) = (2,1,0,\ldots,0), \cdots,  \lex(\bae_{n}) = (2,1,\ldots,1),
$$
and the claim follows from Remark~\ref{mm}.
\ps
(2) 
This follows immediately from (1) by setting $2h_2:=r_1-2r_2,\, h_j:=r_{j-1}-r_j,\ n> j\geq 3,\ h_n=r_{n-1}$.
\end{proof}

Recall the definition of the symmetric function $q_{n}\in \CC[\blam_{1},\ldots,\blam_{n}]$ from section~\ref{ph-and-qn.subsec}:
\begin{equation*}
\begin{split}
p_h &:= \prod_{1\leq j_1<j_2<\ldots<j_h\leq n}(\bar\lambda_{j_1}+\bar \lambda_{j_2}+ \cdots+ \bar\lambda_{j_h})=\prod_{\substack{T\subset\{1,2,\ldots,n\} \\ |T|=h}}\bar\lambda_T \quad\text{for }2h<n,
\\
p_m&:=\prod_{1 <j_2<\ldots<j_m\leq 2m}(\bar\lambda_{ 1}+\bar \lambda_{j_2}+ \cdots+ \bar\lambda_{j_m})
=\prod_{\substack{T\subset\{1,2,\ldots,n\} \\ |T|=m,\ 1\in T}}\bar\lambda_T\quad\text{for }n=2m,
\end{split}
\end{equation*}
and
$$
q_{n}:=p_{1}\cdots p_{m} \text{ where }  m:=\left\lfloor\frac{n}{2}\right\rfloor.
$$
\begin{lemma}\label{lex-qn}
The leading exponent of $q_{n}$ is $\lex(q_{n})=(2^{n-2},2^{n-3},\ldots,2,1)$.
\end{lemma}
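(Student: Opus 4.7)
The plan is to use the multiplicativity of $\lex$ from Remark~\ref{mm} to expand $q_n = p_1 p_2 \cdots p_m$ as a product of linear forms $\bar\lambda_T$ and to sum their individual leading exponents. The leading exponent of each such linear factor is a coordinate vector $(0,\ldots,0,1,0,\ldots,0) \in \NN^{n-1}$, so the problem reduces to a combinatorial count: for each $i \in \{1,\ldots,n-1\}$, how many of the factors have their leading monomial equal to $\pm \bar\lambda_i$?

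First, I would identify the leading monomial of a single factor $\bar\lambda_T$ for $T \subseteq \{1,\ldots,n\}$. If $n \notin T$, then $\bar\lambda_T = \sum_{j\in T}\bar\lambda_j$ is already expressed in the variables $\bar\lambda_1,\ldots,\bar\lambda_{n-1}$ and its leading monomial is $\bar\lambda_{\min T}$. If $n \in T$, the relation $\bar\lambda_n = -(\bar\lambda_1+\cdots+\bar\lambda_{n-1})$ yields $\bar\lambda_T = -\bar\lambda_{T^c}$, where $T^c := \{1,\ldots,n\}\setminus T$, with leading monomial $-\bar\lambda_{\min T^c}$. In both cases, the leading monomial of $\bar\lambda_T$ is $\pm\bar\lambda_{\min C}$, where $C \in \{T,T^c\}$ is the unique block not containing $n$.

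Next, I would check that the multiset of linear factors appearing in $q_n$ is in bijection with the set of two-block set-partitions $\{A,B\}$ of $\{1,\ldots,n\}$ into non-empty parts. For odd $n$, with $m = (n-1)/2$, the factors $\bar\lambda_T$ with $|T| \in \{1,\ldots,m\}$ enumerate the smaller side of each partition. For even $n = 2m$, the factors with $|T|<m$ handle the unbalanced partitions, while the restriction ``$1 \in T$'' built into $p_m$ selects exactly one representative from each pair $\{T,T^c\}$ of balanced $m$-subsets. The total count works out to $2^{n-1}-1 = \deg q_n$ as a sanity check.

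Combining these two observations, the $i$-th coordinate of $\lex(q_n)$ equals the number of two-block partitions $\{A,B\}$ of $\{1,\ldots,n\}$ for which the block $A$ not containing $n$ satisfies $\min A = i$. Such an $A$ is a non-empty subset of $\{1,\ldots,n-1\}$ with smallest element $i$, hence $A = \{i\} \cup A'$ for an arbitrary $A' \subseteq \{i+1,\ldots,n-1\}$, giving $2^{n-1-i}$ choices. Thus $\lex(q_n) = (2^{n-2}, 2^{n-3}, \ldots, 2, 1)$. The main subtlety will be handling the even case $n = 2m$ cleanly: one has to confirm that the ``$1\in T$'' convention in $p_m$ indeed yields exactly one representative per balanced partition, so that no linear factor is double-counted or omitted in the bijection.
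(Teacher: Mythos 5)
Your proof is correct, and it shares its skeleton with the paper's: both rely on the additivity of $\lex$ over the product of the linear factors $\blam_{T}$ (Remark~\ref{mm}) and on the observation that, via $\blam_{T}=-\blam_{T'}$ for $T'=\{1,\ldots,n\}\setminus T$ when $n\in T$, the leading monomial of $\blam_{T}$ is $\pm\blam_{\min C}$ where $C$ is whichever of $T,T'$ avoids $n$. Where you genuinely diverge is in how the count is organized. The paper works factor-group by factor-group: it rewrites each $p_{h}$ as $\pm f_{h}f_{n-h}$ with $f_{k}$ a product over $k$-subsets of $\{1,\ldots,n-1\}$, computes $\lex(f_{k})$ coordinatewise as binomial coefficients $\binom{n-1-j}{k-1}$, and then obtains $r_{i}$ by summing binomials over $h$, with separate computations for $n$ odd and $n=2m$ (where the self-paired $p_{m}$ contributes an extra half-term $\binom{2m-i-1}{m-1}$). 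You instead set up one global bijection between the multiset of all linear factors of $q_{n}$ and the two-block partitions of $\{1,\ldots,n\}$, equivalently the nonempty subsets $A\subseteq\{1,\ldots,n-1\}$ (the block avoiding $n$), so that $r_{i}$ is simply the number of such $A$ with $\min A=i$, namely $2^{n-1-i}$: no binomial identities and no parity split in the final count. The parity enters only where you correctly flag it, in checking that the factors of $q_{n}$ hit each partition exactly once (which is precisely what the condition $1\in T$ in the definition of $p_{m}$ arranges for $n=2m$), and the degree identity $\deg q_{n}=2^{n-1}-1=\#\{\text{two-block partitions}\}$ confirms the bijection. Your route is shorter and more transparent; the paper's yields the leading exponents of the individual $f_{k}$ and $p_{h}$ along the way, but these are not used elsewhere, so nothing is lost.
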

\begin{proof}
For $T:=\{  j_1,j_2,\ldots,j_h\}$,  $1\leq j_1<j_2<\ldots<j_h\leq n$ we have
$$
\blam_T=\begin{cases}
\blam_{j_1}+\blam_{j_2}+ \cdots+ \blam_{j_h} & \text{if } j_h<n\\
\blam_{j_1}+\blam_{j_2}+ \cdots+ \blam_{j_{h-1}}-\sum_{i=1}^{n-1}\blam_i &\text{if } j_h=n
\end{cases}
$$  
Thus, if $n\in T$, then $\blam_T= -\blam_{T'}$ where $T' := \{1,\ldots,n\}\setminus T$.
The map $T\mapsto T'$ is a bijection between the subsets $T$ of  $ \{1,2,\ldots,n\}$ containing $n$ and of cardinality $h$    with  the subsets   of  $ \{1,2,\ldots,n-1\}$   of cardinality $n-h$. This implies, for $2h<n$:
\begin{gather*}
p_h = \pm\prod_{\substack{T\subset\{1,\ldots,n-1\} \\ |T|=h}}\blam_T 
\prod_{\substack{T\subset\{1,\ldots,n-1\} \\ |T|=n-h }}\blam_T
=\pm \, f_h \, f_{n-h} \\
\text{ \ where \ }
f_k:= \prod_{\substack{T\subset\{1,\ldots,n-1\} \\ |T|=k}}\bar\lambda_T.
\end{gather*}
The leading term of $\blam_T,\ T\subset\{1,2,\ldots,n-1\}$ is $\blam_j$ with $j := \min T$, and  
the number of  subsets $T\subset\{1,2,\ldots,n-1\}$ with $|T|=h$ and $j=\min T$ equals the number of subsets 
$T\subseteq \{j+1,\ldots,n-1\}$ with $|T|=h-1$. This number is equal to $\binom{n-1-j}{h-1}$ if $h\leq n-j$, and $0$ otherwise. Setting $\binom m k=0$ if $m<k$, we see that the leading exponent of $f_{k}$ is given by
$$
\lex(f_{k}) = \textstyle{(\binom{n-2}{k-1},\binom{n-3}{k-1},\ldots,\binom{n-i-1}{k-1},\ldots,\binom{1}{k-1},\binom {0}{k-1})}.
$$
(Recall that $\binom{0}{0} = 1$.)
The leading exponent of $p_h$ is thus 
\begin{multline*}
\lex(p_{h}) = \textstyle{(\binom{n-2}{h-1}+\binom{n-2}{n-h-1},\ldots,
\binom {n-i-1} {h-1}+\binom {n-i-1} {n-h-1},\ldots,} \\
\textstyle{\binom{1}{h-1}+\binom{1}{n-h-1},\binom{0}{h-1}+\binom{0}{n-h-1})}.
\end{multline*}
If $n=2m+1$, then $q_{n} = p_{1}\cdots p_{m}$, and we find for the leading exponent of $q_{n}$, $\lex(q_{n})=(r_{1},\ldots,r_{n-1})$ where
$$
r_{i} = \sum_{h=1}^{m} \left(\binom{n-i-1}{h-1} + \binom{n-i-1}{n-h-1}\right)=\sum_{h=0}^{2m-1}\binom{2m-i}{h} = 2^{2m-i}=2^{n-i-1},
$$
as claimed.
\ps
If $n=2m$,  we have $q_{n}= p_{1}\cdots p_{m-1}p_{m}$ where $p_{m}=\prod_{\substack{T\subset\{1,2,\ldots,n\} \\ |T|=h,\ 1\in T}}\bar\lambda_T$. In this case, the map $T \mapsto T':=\{1,\ldots,2m\}\setminus T$ is a bijection between the subsets containing $1$ and $2m$ and of cardinality $m$ and the subsets of $\{2,3,\ldots,2m-1\}$ containing $m$ elements. Hence
$$
p_{m}=\pm \prod_{\substack{T\subset\{1,\ldots,2m-1\} \\1\in T,\, |T|=m}}\blam_T 
\prod_{\substack{T\subset\{ 2,\ldots,2m-1\} \\ |T|= m }}\blam_T.
$$
The leading monomial of the first product is $\blam_{1}^{\binom{2m-2}{m-1}}$. For the second product, we see as above that the number of subsets of $\{2,\ldots,2m-1\}$ of cardinality $m$ with minimum $j\geq 2$ is equal to $\binom{2m-j-1}{m-1}$. Hence
$$
\lex(p_{m}) = \textstyle{(\binom{2m-2}{m-1}, (\binom{2m-3}{m-1},\ldots,\binom{2m-i-1}{m-1},\ldots,\binom{1}{m-1},0)},
$$  
and thus we get for the leading exponent $\lex(q_{n}) = (r_{1},\ldots,r_{n-1})$
\begin{equation*}
\begin{split}
r_{i} &=  \sum_{h=1}^{m-1} \left(\binom{2m-i-1}{h-1} + \binom{2m-i-1}{2m-h-1}\right) + \binom{2m-i-1}{m-1}\\
&= \sum_{h=0}^{2m-2}\binom{2m-i-1}{h} = 2^{2m-i-1} = 2^{n-i-1}.
\end{split}
\end{equation*}
This proves the lemma.
\end{proof}

\begin{remark}\label{lexq=lexe}
For $n\geq 4$ we have
$$
\lex(q_{n}) = \lex(e^{\bn}) \text{ where }\bn := (0,2^{n-4},2^{n-5},\ldots,2,1,1).
$$
Moreover,  $\lex(q_{3})=(2,1)=\lex(\bae_{3})$.
\end{remark}

\ps
\subsection{A basis for the perpetuants}
Recall that $\bar\Sigma_{n,g}$ is the space of symmetric functions of degree $g$ in $\blam_{1},\ldots,\blam_{n}$,
$$
\bar\Sigma_{n,g}=(\CC[\lambda_{1},\ldots,\lambda_{n}]^{\Sn}/(\lambda_{1}+\cdots+\lambda_{n}))_{g} = \CC[\bar\lambda_1,\ldots,\bar\lambda_n]_{g}^{\Sn}.
$$
In the next lemma we use the partial order 
$$
(t_{2},\ldots,t_{n}) \succeq (s_{2},\ldots,s_{n}) \iff t_{i}\geq s_{i} \text{ for all } i.
$$
\begin{lemma}\label{scell}
For $n\geq 3$ a basis $\BBB_{n}$ of a complement of $O_{n,g}:=\bar\Sigma_{n,g}\cap (q_{n})$,  in the space of symmetric functions in $\bar\Sigma_{n,g}$, is formed by the monomials $e^{\bh}:=\prod_{k=2}^n e_k^{h_k}$   with $\sum_k kh_k=g$, satisfying
$$
\bh = (h_{2},\ldots,h_{n}) \nsucceq \bn:=(0,2^{n-4},2^{n-5},\ldots,2,1,1) \text{ for }n>3, 
$$
respectively, $\bh = (h_{2},h_{3}) \nsucceq (0,1)$ for $n=3$.
\end{lemma}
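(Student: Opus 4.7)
The plan is to establish a direct sum decomposition $\bar\Sigma_{n,g} = O_{n,g}\oplus V_B$, where $V_B:=\mathrm{span}\{e^{\bh}:\bh\nsucceq\bn\}$, by a leading-term argument in the basis $\{e^{\bh}\}$. Set $A:=\{\bh:\bh\succeq\bn,\,|\bh|=g\}$ (with $|\bh|:=\sum_k k h_k$) and $V_A:=\mathrm{span}\{e^{\bh}:\bh\in A\}$. Since $\bar\Sigma_n=\CC[\bae_2,\ldots,\bae_n]$ is a polynomial ring, the $e^{\bh}$ with $|\bh|=g$ form a basis of $\bar\Sigma_{n,g}$ and, $\bar\Sigma_n$ being a domain, the family $\{q_n\cdot e^{\bk}\}_{|\bk|=g-\deg q_n}$ is a basis of $O_{n,g}=q_n\cdot\bar\Sigma_{n,g-\deg q_n}$. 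Under the bijection $\bk\leftrightarrow\bn+\bk$, $|A|$ equals the number of such $\bk$, so $\dim V_A=\dim O_{n,g}$ and therefore $|A|+|B|=\dim\bar\Sigma_{n,g}$. It thus suffices to show that the projection $\pi_A:\bar\Sigma_{n,g}\twoheadrightarrow V_A$ along $V_B$ restricts to an isomorphism on $O_{n,g}$.

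For the triangularity, I would combine Remark~\ref{lexq=lexe} (giving $\lex(q_n)=\lex(e^{\bn})$, including the case $n=3$ with $\bn=(0,1)$ and $e^{\bn}=\bae_3$) with Lemma~\ref{lex-of-e^h}(2) on the distinctness of the leading exponents of the $e^{\bh}$. Expanding $q_n=d_{\bn}e^{\bn}+r$ in the $e$-basis, these two facts force $d_{\bn}\neq 0$ and every monomial $e^{\bh'}$ appearing in $r$ to satisfy $\lex(e^{\bh'})<\lex(e^{\bn})$. Multiplying by $e^{\bk}$ and using the additivity of $\lex$ (Remark~\ref{mm}),
\[
q_n\cdot e^{\bk}=d_{\bn}\,e^{\bn+\bk}+r\cdot e^{\bk},
\]
and every $e^{\bh}$ appearing in the expansion of $r\cdot e^{\bk}$ satisfies $\lex(e^{\bh})<\lex(e^{\bn+\bk})$. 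Ordering $A$ totally by the lex-order of leading exponents and transferring this order to the $\bk$'s via $\bh=\bn+\bk$, the matrix of the $V_A$-coordinates of $\{q_n\cdot e^{\bk}\}$ is then lower triangular with diagonal entries all equal to $d_{\bn}\neq 0$, hence invertible.

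Invertibility shows that $\pi_A|_{O_{n,g}}$ is injective, and the dimension match promotes it to an isomorphism $O_{n,g}\simto V_A$. This yields $\bar\Sigma_{n,g}=O_{n,g}\oplus V_B$, which is precisely the statement that $\BBB_n=\{e^{\bh}:\bh\nsucceq\bn\}$ is a basis of a complement of $O_{n,g}$. The key step is the triangularity, and the only way it could fail would be through a breakdown of either (i) the match $\lex(q_n)=\lex(e^{\bn})$ or (ii) the distinctness of the $\lex(e^{\bh})$; both are already supplied by the preceding results, so I do not anticipate any essential obstacle.
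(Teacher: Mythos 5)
Your proposal is correct and uses essentially the same argument as the paper: both proofs rest on $\lex(q_n)=\lex(\bae^{\bn})$ (Remark~\ref{lexq=lexe}), the distinctness of the leading exponents $\lex(\bae^{\bh})$ (Lemma~\ref{lex-of-e^h}(2)), and the additivity of $\lex$, the only cosmetic difference being that the paper phrases the conclusion as ``the set $\{q_n\bae^{\bk}\}\cup\{\bae^{\bh}:\bh\nsucceq\bn\}$ has the same distinct leading exponents as the monomial basis of $\bar\Sigma_{n,g}$, hence is itself a basis,'' while you package the identical triangularity as invertibility of the projection onto $V_A$ along $V_B$.
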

\begin{proof}
A basis of  $O_{n,g}$ is formed by the  symmetric functions  $q_{n}\bae^{\,\bk} = q_{n} \prod_{j=2}^n \bae_j^{k_j}$ with $\sum_j jk_j=g-2^{n-1}+1 $.  We have seen, in Remark~\ref{lexq=lexe}, that the leading  exponent  of $q_{n}  \bae^{\,\bk}$ equals the leading  exponent of $\bae^{\,\bn+\bk}$.  It follows that the set
\begin{equation*} 
\mathbb X_{n}:=\left\{q_{n} \bae^{\,\bk}\mid \sum_{j}j k_{j}=g-2^{n-1}+1\right\} 
\cup \left\{\bae^{\,\bh}\mid \sum_{i}ih_{i}=g,\ \bh\not \succeq \bn\right\} \subset \bar\Sigma_{n,g}
\end{equation*}
has the same leading exponents as the basis $\{\bae^{\,\bh} \mid \sum_{i}ih_{i}=g\}$ of $\bar\Sigma_{n,g}$. Since these leading exponents are distinct, by Lemma~\ref{lex-of-e^h}(2), it follows that $\mathbb X_{n}$ is a basis of  $\bar\Sigma_{n,g}$ hence  $\BBB_{n}$ is a basis of   a complement, in $\bar\Sigma_{n,g}$,  of $O_{n,g}$, hence the claim.
\end{proof}

We have seen in Proposition~\ref{ssee} that  the potenziante 
$\pi_{n,g}(\blam;a)\in \bar\Sigma_{n,g}\otimes  S_{n,g}$ has the form
\begin{equation}\label{Uk}
\pi_{n,g}( \bar\lambda_1,\ldots,\bar\lambda_n;a_0,a_1,\ldots,a_g)=
\sum_{\substack{k_2,\ldots,k_n,\\  \sum ik_i=g}} 
e_2^{k_2}\dots e_n^{k_n} U_{k_2,\ldots,k_n}
\end{equation}
where the $U_{k_{2},\ldots,k_{n}}$ form a basis of the space $S_{n,g}\subset \CC[a_{0},\ldots,a_{n}]$ of $U$-invariants  of degree $n$ and weight $g$. 

Using Corollary~\ref{iPeP} with the basis $\BBB_{n}$ of a complement $M_{n,g}\subset \bar\Sigma_{n,g}$   to $O_{n,g} $ constructed above we get as consequence our main result.

\begin{theorem}\label{basis-perp}
The elements $U_{k_{2},\ldots,k_{n}}$ from Formula~\eqref{Uk} with 
$$
\bk \succeq \bn = (0,2^{n-4},\ldots,2,1,1)
$$  
(resp. $ \bn = (0,1)$)  form a basis of a space of perpetuants of degree $n>3$ (resp. $n=3$) and weight  $g$.
\end{theorem}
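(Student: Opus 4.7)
The plan is to combine three ingredients already in place: Corollary~\ref{iPeP} (which reduces finding a space of perpetuants to exhibiting a complement $M_{n,g}$ to $O_{n,g}=\bar\Sigma_{n,g}\cap(q_n)$ in $\bar\Sigma_{n,g}$), Lemma~\ref{scell} (which produces such a complement explicitly in terms of monomials in the $\bae_i$), and Proposition~\ref{ssee} (which identifies the $\{U_{\bk}\}$ as the basis of $S_{n,g}$ dual to $\{\bae^{\bh}\}$ under the dualizing tensor $\bar\pi_{n,g}$). The theorem will then fall out by a purely formal dualization, with no new technical content beyond the leading-exponent analysis of Lemma~\ref{lex-qn} that fed into Lemma~\ref{scell}.

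First I would fix the complement $M_{n,g}\subset \bar\Sigma_{n,g}$ supplied by Lemma~\ref{scell}, with basis
\[
\BBB_n = \{\bae^{\bh} : \textstyle\sum_i i h_i = g,\ \bh \nsucceq \bn\},
\]
where $\bn = (0,2^{n-4},\ldots,2,1,1)$ for $n>3$ and $\bn=(0,1)$ for $n=3$. Next I would invoke Proposition~\ref{ssee}, which writes the potenziante as
\[
\bar\pi_{n,g}(\blam;a) = \sum_{\sum i h_i = g} \bae^{\bh} \otimes U_{\bh},
\]
and exhibits it as a dualizing tensor between $\bar\Sigma_{n,g}$ and $S_{n,g}$ with $\{\bae^{\bh}\}$ and $\{U_{\bh}\}$ as a pair of dual bases. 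The indexing set splits naturally into the portion $\{\bh \nsucceq \bn\}$, which is a basis of $M_{n,g}$, and its complement $\{\bh \succeq \bn\}$, which then gives a basis of some (not canonical) complement of $M_{n,g}$ in $\bar\Sigma_{n,g}$.

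Applying Proposition~\ref{LaD} to this split, the orthogonal $M_{n,g}^\perp \subset S_{n,g}$ is spanned precisely by those $U_{\bh}$ whose dual partner $\bae^{\bh}$ lies outside $M_{n,g}$, namely the $U_{\bh}$ with $\bh \succeq \bn$. Corollary~\ref{iPeP} then identifies $M_{n,g}^\perp$ as a space of perpetuants of degree $n$ and weight $g$, giving the claimed basis. The only step requiring a moment's care is the matching of the index $\bh$ with the index $\bk = (k_2,\ldots,k_n)$ of $U_{k_2,\ldots,k_n}$ in Formula~\eqref{Uk}, but this is purely notational, so I foresee no genuine obstacle: all the nontrivial work has already been absorbed into the construction of $M_{n,g}$ via leading exponents.
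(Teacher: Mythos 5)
Your proposal is correct and follows essentially the same route as the paper: the paper's proof of Theorem~\ref{basis-perp} is precisely the one-line combination of Corollary~\ref{iPeP} with the complement $M_{n,g}$ from Lemma~\ref{scell} and the dual-basis description of the potenziante from Proposition~\ref{ssee}, which you have merely spelled out in more detail (including the correct identification of $M_{n,g}^{\perp}$ as the span of the $U_{\bk}$ with $\bk\succeq\bn$).
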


Observe that the decomposable elements do not have a basis  extracted from the elements $U_{k_{2},\ldots,k_{n}}$.\begin{remark}\label{trii} 
Finally, in order to compute explicitly the perpetuants  of Theorem~\ref{basis-perp} one needs  to compute the numbers $ \alpha_{h_1,\dots,h_n, k_1,\dots,k_n}$ of Formula~\eqref{moel}. One possible algorithm is to  compute first 
\begin{equation*} 
e_1^{k_1}\dots e_n^{k_n}=\sum_{  h_1,\dots,h_n} \beta_{h_1,\dots,h_n, k_1,\dots,k_n}\, m_{h_1,\dots,h_n}.
\end{equation*}
The numbers $\beta_{h_1,\dots,h_n, k_1,\dots,k_n}$ form an upper  triangular matrix  $E+A$ of non negative integers with 1 on the diagonal, and its inverse $E-A+A^2-\ldots $ has as entries the integers $ \alpha_{h_1,\dots,h_n, k_1,\dots,k_n}$.

The integer $\beta_{h_1,\dots,h_n, k_1,\dots,k_n}$  is computed as the coefficient  of the monomial $\prod_{i=1}^n \lambda_i^{h_i}$  in the development of  $e_1^{k_1}\dots e_n^{k_n}$.
\end{remark}

\par\medskip
\section{Binary forms} {\em This is a complement to set into the $19^{th}$  century context the theory developed.}
The $q+1$-dimensional vector space  $P_{q}=P_q(x)\subset \CC[x]$ of  polynomials of degree $\leq q$ in the variable $x$, introduced in Section~\ref{seeco},   can be thought of as a {\em  non-homogeneous form} of the space, still denoted by   $P_{q}=P_q(x,y)\subset \CC[x,y]$, of  homogeneous  polynomials of degree $q$ in the variables $x,y$. These are the classical {\em binary forms} or {\em binary $q$-antics.}  On this space acts the group $\GL(2, \CC)$, and, in fact, these spaces form the list of irreducible representations of $\SL(2, \CC)$.  Of course  this is the first  case of the more general theory of  
{\em $n$-ary $q$-antics}, i.e. of  homogeneous  polynomials of degree $q$ in the $n$ variables $x_1,\ldots,x_n$. 
\ps
One of the themes of Algebra of the $19^{th}$ century was to study the algebra $R_q$ of polynomial functions on $P_q$ which are invariant under $\SL(2, \CC)$, and then try the general case of  invariants of $n$-ary $q$-antics.   In particular, to determine a minimal set of generators for such an algebra.  The question whether such a minimal set of generators is finite was one of the main problems of this period, and proved by \name{Gordan} \cite{Go1868Beweis-dass-jede-C} for binary forms  by a difficult combinatorial method.

The problem of finite generation of invariants for a general linear group action, also known as {\it \name{Hilbert}'s $14^{th}$ problem}, has now a very long and complex history (cf. \cite{Na1965Lectures-on-the-fo,Na1959On-the-14-th-probl}) with still several open questions.

In fact,  $R_q$ is a graded algebra, and if $I_q$ denotes the ideal of $R_q$  formed by elements with no constant term, the question is to study $I_q/ I_q^2.$   A partial question is to understand the graded dimension of $I_q/ I_q^2,$ which by \name{Gordan}'s Theorem is a polynomial.   There are in fact various formulas for the graded dimension of $R_q$, but for $I_q/ I_q^2,$ to our knowledge, the only known cases are those in which one can exhibit generators for $I_q/ I_q^2$.  Thus, for binary forms only a few cases are explicitly known.  It is therefore quite remarkable that for perpetuants such a formula exists.

\ps
The reason   to introduce $U$-invariants comes from the theory of {\em covariants} of binary forms, a notion introduced as a tool to compute invariants of binary forms. 
Covariants appear in  3 different forms. For more details, we refer to the literature, e.g. \cite[Chap.~15.1, Proposition~2 and Theorem~1]{Pr2007Lie-Groups---An-Ap}.

\begin{proposition}\label{cova} 
There are canonical bijections between the following objects, called covariants of $P_{n}$ of degree $k$ and order $p$.
\be
\item[(i)]
$\SL(2,\CC)$-equivariant polynomial maps $P_{n}\to P_{p}$  of degree $k$; 
\item[(ii)]
$\SL(2,\CC)$-invariant polynomials on $P_n\oplus \CC^2$ of bidegree $k,p$.
\item[(iii)]
$U$-invariants of $P_n$ of degree
$k$ and isobaric of weight $\frac{nk -p}{2}$. 
\ee
In particular an $\SL(2,\CC)$-invariant on $P_n$ of degree $k$ is a
$U$-invariant of degree $k$ and weight  $\frac{nk}{2}.$ 
\end{proposition}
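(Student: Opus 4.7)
The plan is to split the equivalences into two pieces. For \textbf{(i)$\Leftrightarrow$(ii)}, a polynomial map $F\colon P_{n}\to P_{p}$ of degree $k$ is by definition an element of $\Sym^{k}(P_{n}^{*})\otimes P_{p}=\OOO(P_{n})_{k}\otimes P_{p}$; identifying $P_{p}$ with the space $\OOO(\CC^{2})_{p}$ of homogeneous polynomials of degree $p$ in two variables turns this tensor into a polynomial $\phi(f,v):=F(f)(v)$ on $P_{n}\oplus\CC^{2}$ of bidegree $(k,p)$. The equivariance $F(g\cdot f)=g\cdot F(f)$ is then equivalent to $\phi(g\cdot f,g\cdot v)=\phi(f,v)$, i.e.\ $\SL(2,\CC)$-invariance of $\phi$.

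For \textbf{(ii)$\Leftrightarrow$(iii)} I fix $e:=(1,0)\in\CC^{2}$: its stabilizer in $\SL(2,\CC)$ is exactly the group $U$ of upper-triangular unipotents from Section~\ref{seeco}, and its $\SL(2,\CC)$-orbit is the Zariski-dense subset $\CC^{2}\setminus\{0\}$. The restriction map $\phi\mapsto\psi$ with $\psi(f):=\phi(f,e)$ produces an element $\psi\in\OOO(P_{n})_{k}$ which is $U$-invariant, since $\psi(u\cdot f)=\phi(u\cdot f,u\cdot e)=\phi(f,e)=\psi(f)$. It is injective, because on the dense orbit $\phi(f,g\cdot e)=\psi(g^{-1}\cdot f)$ (well-defined by $U$-invariance of $\psi$) and polynomial extension from a dense open set is unique. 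Conversely, every $U$-invariant of the correct $T$-weight arises this way: this is the standard representation-theoretic fact that for any finite-dimensional $\SL(2,\CC)$-module $W$, the space $\Hom_{\SL(2,\CC)}(P_{p},W)$ is in bijection with the $T$-weight-$p$ subspace of $W^{U}$ (sending a homomorphism to the image of a highest weight vector), combined with $P_{p}\simeq P_{p}^{*}$ as $\SL(2,\CC)$-modules. This part — proving the surjectivity of the restriction map — is the main technical point; the rest is formal.

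Finally, the weight condition $g=(nk-p)/2$ is a torus-weight calculation. Under $t=\mathrm{diag}(\mu,\mu^{-1})\in \SL(2,\CC)$ we have $t\cdot e=\mu e$, so the bidegree of $\phi$ gives $\phi(f,t\cdot e)=\mu^{p}\phi(f,e)$, while $\SL(2,\CC)$-invariance yields $\phi(f,t\cdot e)=\phi(t^{-1}\cdot f,e)=\psi(t^{-1}\cdot f)$; hence $\psi$ has $T$-weight $p$. A direct computation using the convention $p(x,y)=\sum_{j}a_{j}x^{[n-j]}y^{j}$ of Section~\ref{seeco} shows that the coordinate $a_{j}$ has $T$-weight $n-2j$, so a monomial $a_{j_{1}}\cdots a_{j_{k}}$ of Cayley weight $g=\sum_{i}j_{i}$ has $T$-weight $nk-2g$. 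Equating $nk-2g=p$ gives $g=(nk-p)/2$. The last assertion of the proposition is simply the case $p=0$, where $P_{p}=\CC$ and the covariant is an absolute $\SL(2,\CC)$-invariant of $P_{n}$ of weight $nk/2$.
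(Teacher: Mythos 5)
Your proof is correct, and steps (i)$\Leftrightarrow$(ii) and the torus-weight computation $nk-2g=p$ coincide with the paper's. Where you genuinely diverge is the surjectivity of the restriction map in (ii)$\Leftrightarrow$(iii). The paper argues geometrically: $\CC^{2}\setminus\{0\}$ is a single $\SL(2,\CC)$-orbit with stabilizer $U$, a $U$-invariant $\psi$ therefore spreads out to a regular invariant function on $P_{n}\oplus(\CC^{2}\setminus\{0\})$, and this extends to a polynomial on all of $P_{n}\oplus\CC^{2}$ because the removed locus has codimension two (this is the "compactness of $\SL(2,\CC)/B\simeq\PP^{1}$" the paper alludes to). You instead invoke the representation-theoretic correspondence $\Hom_{\SL(2,\CC)}(P_{p},W)\simeq (W^{U})_{p}$ together with the self-duality $P_{p}\simeq P_{p}^{*}$, applied to $W=\OOO(P_{n})_{k}$. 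Both are standard and both work; your route packages the extension problem into complete reducibility and highest-weight theory (and automatically yields the necessary constraint $p=nk-2g\geq 0$, which the geometric argument must also secure for the extension to be polynomial of nonnegative order), while the paper's route is more elementary and self-contained but leaves the codimension-two extension implicit. The one point you should make explicit if you keep your version is that the abstract isomorphism you quote really is induced by evaluation at $e=(1,0)$, i.e.\ that pairing with $e$ picks out a weight-$p$ vector of $P_{p}^{*}$; this is a one-line check but it is what ties your bijection to the concrete restriction map $\phi\mapsto\phi(\cdot,e)$.
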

\noindent
(The reader experienced in algebraic geometry may see that the geometric reason behind these statements is the fact that $\SL(2,\CC)/B \simeq \PP^1$ is compact.)

\begin{proof} 
(i)$\iff$(ii): 
Given such a polynomial map $F\colon P_n\to P_p$ we can evaluate the form $F(f)$  in a point $(x,y)\in  \CC^2$,  $\tilde F(f,(x,y)):=F(f)(x,y)$  obtaining an $\SL(2,\CC)$-invariant of the desired form. The opposite construction is essentially tautological by the definition  of the actions.
\ps
(ii)$\iff$(iii):
Observe that a regular function on  
$P_n\oplus (\CC^{2}\setminus\{0\})$ extends as polynomial   on $P_n\oplus \mathbb C^2$. Under $\SL(2,\CC)$, the space $\CC^2\setminus\{0\}$ is the orbit of  $e_1$ with stabilizer $U$. This implies that the polynomials on $P_n\oplus \CC^2$ invariant under $\SL(2,\CC)$ are in bijection with the polynomials on $P_n\times \{e_1\}$ invariant under $U$.

Now consider the torus elements  $D_t:=\left[\begin{smallmatrix} t^{-1}&0\\0&t\end{smallmatrix}\right]\in \SL(2,\CC)$. They act on the space $\CC^2$ transforming $x\mapsto  t^{-1}x$, $y\mapsto ty$. The action on the forms $f\in P_n$ is
$$
(D_tf)(x,y)=f(tx,t^{-1}y) =  \sum_{i=0}^na_i (tx)^{[n-i]}(t^{-1}y)^{[i]}=
\sum_{i=0}^na_i  t^{n-2i}x^{[n-i]}y^{[i]}.
$$ 
In other words, $D_t$ transforms $a_i\mapsto t^{n-2i}a_i$.  A covariant $F$ of degree $k$ and order $p$ 
must be an invariant function of this transformation on $P_n\oplus \mathbb C^2$, or
$$ 
F(t^na_0, \ldots,t^{n-2i}a_i,\ldots,t^{-n}a_n, t^{-1}x,t y)= F(a_0, \ldots,a_n,x,y).
$$
By assumption, $F = \sum_{i=0}^{p}F_{i}(a_{0},\ldots,a_{n})x^{p-i}y^{i}$, hence
$$
 F_0(t^na_0, \dots,t^{n-2i}a_i,\dots,t^{-n}a_n)(t^{-1}x)^p = F_0(a_0, \dots,a_n)x^{p}.
 $$
A monomial in $F_0$ in the $a_i$ is of weight $g$, hence it is multiplied by $t^{nk-2g}$. We deduce that for every
monomial we have $nk-2g-p=0$,  as required.
\end{proof}

The $U$-invariant $F_{0}$ associated to a covariant $F$ is called its {\em source} (or {\em Quelle} in German).
There is a simple formula to write down the covariant starting  from its source, see \cite{Hi1993Theory-of-algebrai}.

\par\medskip
\subsection{\texorpdfstring{$U$}{U}-invariants for binary forms}\label{U-invar.sec}
For the algebra $S(n)$ of $U$-invariants for  $P_n$ the results are not as precise as for the limit algebra $S$.

In classical literature  explicit computations  were done correctly only up to  degree 6, and degree 8, with partial results in degree 7. With the help of computers now one has computations up to degree 12.  Here we want to give a simple method which we believe is due to \name{Cayley} and which works very well up to degree 4. 
\ps
Let us take a polynomial $f=\sum_{i=0}^na_i x^{[n-i]} $ with $a_0\neq 0$. Under the
transformation $x\mapsto x-\frac{a_1}{a_0}$ it is transformed into a polynomial  with $a_1=0$ (cf. Formula~\ref{azio1}):
\begin{equation*}
\begin{split}
f(x-\frac{a_{1}}{a_{2}}) &= a_0(x-\frac{a_1}{a_0})^{[n]}+ a_1(x-\frac{a_1}{a_0})^{[n-1]}+\cdots \\
&= a_0 x^{[n]}-a_0\frac{a_1}{a_0}  x^{[n-1]}+\cdots+ a_1 x  ^{[n-1]}+\cdots \\
& = a_0 x^{[n]} + (-\frac{a_{1}^{2}}{2a_{0}} + a_{2})x^{[n-2]}+\cdots
\end{split}
\end{equation*}
More formally, let $P_n^0\subset P_{n}$ be the set of polynomials of degree $n$ with 
$a_0\neq 0$, and let $P_n'\subset P_{n}^{0}$ be the set of
polynomials of degree $n$ with $a_0\neq 0$, $a_1=0$. The previous remark shows that acting with $U$ we
have an isomorphism $U\times P_{n'}\simto P_n^0$.  Thus we have an identification of the $U$-invariant functions on
$P_n^0$ with the functions on $P_n'$.  More precisely, the map (notation from Formula~\eqref{azio})
$$
\pi\colon P_{n}^{0} \to P_{n}', \quad f\mapsto \frac{a_{1}}{a_{0}} \cdot f,
$$ 
is $U$-invariant, and so the pull-backs of the coordinate functions of $P_{n}'$ together with $a_{0}^{-1}$ generate the $U$-invariants on $P_{n}^{0}$. By Formula~\eqref{azio1}, these pull-backs are given by
$$
(-\frac{a_{1}}{a_{0}})\cdot a_{k} = \sum_{j =0}^k  a_j (-\frac{a_1}{a_0})^{[k-j]}=a_0^{-k+1}c_k
$$
where
\begin{equation*}
\begin{split}
c_k &= (- a_1 )^{[k ]}+ \sum_{j =1}^k a_0^{  j-1 } a_j (- a_1 )^{[k-j]}\\
&=(- a_1 )^{[k ]}+  a_1 (- a_1 )^{[k-1]}+\sum_{j =2}^k a_0^{  j-1 } a_j (- a_1 )^{[k-j]} \\
&=(-1)^{k}(1-k) a_1^{[k ]} +\sum_{j =2}^k (-1)^{k-j}a_0^{  j-1 } a_j a_1^{[k-j]}.
\end{split}
\end{equation*}
Thus we get the following result.
\begin{theorem} We have
$S(n)[a_0^{-1}]=\CC[c_2,\dots,c_n][a_0,a_0^{-1}]$ where $a_{0},c_{2},\ldots,c_{n}$ are algebraically independent.
In particular, $\dim S(n)=n$.
\end{theorem}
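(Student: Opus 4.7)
The plan is to realize the open subset $P_n^0 := \{f \in P_n \mid a_0(f)\neq 0\}$ as a principal $U$-bundle over the transverse slice $P_n' := \{f \in P_n^0 \mid a_1(f)=0\}$, and to read off the ring of $U$-invariants on $P_n^0$ from the coordinate ring of $P_n'$.

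First I would verify that the map $\mu\colon U\times P_n' \to P_n^0$, $(\lambda,f')\mapsto \lambda\cdot f'$, is a $U$-equivariant isomorphism of varieties. The computation preceding the theorem shows that translating any $f\in P_n^0$ by $\lambda=a_1/a_0$ kills its linear coefficient, so the inverse is the regular map $f \mapsto (a_1/a_0,\,(-a_1/a_0)\cdot f)$; both $a_1/a_0$ and the translation operator $f\mapsto \lambda \cdot f$ are regular on $P_n^0$, so $\mu$ is biregular.

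From $\mu$ it follows that pullback along the quotient $\pi\colon P_n^0 \to P_n'$, $f\mapsto (a_1/a_0)\cdot f$, is an isomorphism $\OOO(P_n')\simto \OOO(P_n^0)^U$. The coordinates $a_0$ (invertible) and $a_2,\ldots,a_n$ on $P_n'$ are algebraically independent, and by the explicit formula in the text their pullbacks are $\pi^*(a_0)=a_0$ and $\pi^*(a_k)=a_0^{-(k-1)}c_k$ for $k\ge 2$. Hence
$$
\OOO(P_n^0)^U = \CC[a_0,a_0^{-1},c_2,\ldots,c_n],
$$
with $a_0,c_2,\ldots,c_n$ algebraically independent.

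Finally I would compare $\OOO(P_n^0)^U$ with $S(n)[a_0^{-1}]$. Since $P_n^0$ is dense open in $P_n$, restriction $S(n)\hookrightarrow \OOO(P_n^0)^U$ is injective, and as $a_0 \in S(n)$ becomes invertible this extends to an injection $S(n)[a_0^{-1}] \hookrightarrow \OOO(P_n^0)^U$. For the reverse inclusion, each $c_k$ is manifestly a polynomial in $a_0,a_1,\ldots,a_k$; the identity $c_k = a_0^{k-1}\cdot(a_0^{-(k-1)}c_k)$ on $P_n^0$, combined with the $U$-invariance of both factors and the density of $P_n^0$, shows $c_k\in S(n)$. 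Therefore $\CC[a_0^{\pm 1},c_2,\ldots,c_n]\subseteq S(n)[a_0^{-1}]$, giving equality. The dimension statement follows since $S(n)$ and $S(n)[a_0^{-1}]$ have the same transcendence degree over $\CC$, namely $n$.

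The step I expect to require the most care is the biregularity of $\mu$: it is precisely the fact that the translation killing $a_1$ is a regular (not merely set-theoretic) section that turns $P_n^0\to P_n'$ into a trivializable principal $U$-bundle and makes the computation of the invariant ring frictionless.
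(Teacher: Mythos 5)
Your proposal is correct and follows essentially the same route as the paper: the paper also identifies $U\times P_n'\simto P_n^0$ via the translation killing $a_1$, identifies $\OOO(P_n^0)^U$ with $\OOO(P_n')$, and reads off the generators as $a_0^{\pm1}$ together with the pullbacks $a_0^{-k+1}c_k$ of the coordinates $a_k$. Your extra care in checking that $\OOO(P_n^0)^U=S(n)[a_0^{-1}]$ (via density of $P_n^0$ and invariance of $a_0$) only makes explicit a step the paper leaves implicit.
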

\par\noindent
Let us explicit some of these elements:
\begin{equation}\label{ck}
\begin{split}
c_2 &= -a_1^{[2]}+a_0a_2,\\
c_3 &= \phantom{-} 2 a_1^{[3]}- a_0a_1a_2+a_0^2a_3,\\
c_4 &= -3 a_1^{[4]}+ a_0a_1^{[2]}a_{2} - a_0^2a_1a_3+a_0^3a_4,\\
c_5 &= \phantom{-} 4 a_1^{[5]} - a_0 a_1^{[3]} a_{2} + a_0^2 a_1^{[2]} a_{3} - a_0^3 a_{1} a_4 + a_0^4 a_5,\\
c_6 & =-5a_1^{[6]} + a_0 a_1^{[4]} a_{2} - a_0^2 a_1^{[3]} a_{3} + a_0^3 a_{1}^{[2]} a_4 - a_0^4 a_1 a_5 + a_0^5 a_6.
\end{split}
\end{equation}
By construction, $c_k$ is a $U$-invariant of degree $k$ and weight $k$ (cf. Definition~\ref{wei}).

\begin{corollary}\label{peu} 
The subalgebra of $S(n)$ generated  by the $U$-invariants with weight equal to the degree is the polynomial ring $\CC[c_2,\dots,c_n]$.
\end{corollary}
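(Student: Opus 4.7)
The plan is to combine the theorem just established, $S(n)[a_0^{-1}]=\CC[c_2,\dots,c_n][a_0,a_0^{-1}]$ with $a_0,c_2,\ldots,c_n$ algebraically independent, with the bigrading on $S(n)$ by degree and weight. First I would record the easy inclusion: by the explicit formulas in \eqref{ck} each $c_k$ is bihomogeneous of degree $k$ and weight $k$, so each $c_k$ belongs to the generating set under consideration, and hence $\CC[c_2,\ldots,c_n]$ is contained in the subalgebra generated by $U$-invariants whose weight equals their degree.

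For the reverse inclusion, I would reduce to the bihomogeneous case by Corollary~\ref{isob.cor}: a $U$-invariant whose weight equals its degree decomposes into bihomogeneous $U$-invariant components, each of which still has weight equal to degree. So I fix $f\in S(n)$ bihomogeneous of degree $d$ and weight $d$. By the theorem $f\in\CC[a_0,a_0^{-1},c_2,\dots,c_n]$, so I can write
\[
f=\sum_{j\in\ZZ,\ \alpha\in\NN^{n-1}} \lambda_{j,\alpha}\, a_0^{j}\, c_2^{\alpha_2}\cdots c_n^{\alpha_n}.
\]
Each monomial on the right is itself bihomogeneous, of degree $j+\sum_k k\alpha_k$ and weight $\sum_k k\alpha_k$, because $a_0$ has bidegree $(1,0)$ and $c_k$ has bidegree $(k,k)$. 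Since $a_0,c_2,\ldots,c_n$ are algebraically independent, the bihomogeneous components with distinct bidegrees are linearly independent, so only monomials with $j+\sum_k k\alpha_k=d$ and $\sum_k k\alpha_k=d$ can appear. This forces $j=0$ on every surviving monomial, so $f\in\CC[c_2,\ldots,c_n]$.

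There is no real obstacle: the combinatorial heart of the matter was already done in proving the theorem that $S(n)[a_0^{-1}]$ is the Laurent polynomial ring over $\CC[c_2,\ldots,c_n]$. Once that and the algebraic independence of $a_0,c_2,\ldots,c_n$ are in hand, the corollary is merely the observation that the condition ``weight equals degree'' is precisely the condition that cuts out the $a_0^0$-slice of the $(\ZZ\times\ZZ)$-graded ring, namely the subring $\CC[c_2,\ldots,c_n]$.
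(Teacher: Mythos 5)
Your proof is correct and follows essentially the route the paper intends: the corollary is stated there without an explicit proof, as an immediate consequence of the theorem $S(n)[a_0^{-1}]=\CC[c_2,\dots,c_n][a_0,a_0^{-1}]$ together with the observation that each $c_k$ is bihomogeneous of bidegree $(k,k)$ while $a_0$ has bidegree $(1,0)$. Your bigrading argument --- algebraic independence forces every contributing monomial $a_0^{j}c_2^{\alpha_2}\cdots c_n^{\alpha_n}$ in a component with weight equal to degree to have $j=0$ --- is exactly the detail the paper leaves implicit.
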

 
\ps
\subsection{An algorithm}
If we want to understand $U$-invariants from these formulas it is necessary to compute the intersection
\begin{equation}\label{intS}
S(n)=\mathbb C[c_2,\dots,c_n][a_0,a_0^{-1}]\cap \mathbb C[a_0, \dots,a_n].
\end{equation}

A general algorithm for these types of problems has been in fact developed by \name{Bigatti-Robbiano} in a recent preprint \cite{BiRo2018Saturations-of-Sub}. It gives by a computer program the $U$-invariants as explicit polynomials up to degree 6.
The complexity of the algorithm, which is general, is much higher than that given by the symbolic method in the special case of $U$-invariants of binary forms with which  those invariants were classically computed.

Roughly speaking the algorithm consists in finding polynomials in the $c_i$ which are divisible by higher and higher powers of $a_0$.

For $n\leq 4$  the algorithm is quite simple and quickly gives:
\subsubsection{The case \texorpdfstring{$n=2$}{n=2}}
$S_2= \CC[  a_0,c_2]$.
\subsubsection{The case \texorpdfstring{$n=3$}{n=3}}
$$ 
8 c_2^3+9 c_3^2 = a_0^2(9 a_0^2 a_3^2-18 a_0 a_1 a_2 a_3+8 a_0 a_2^3+6 a_1^3 a_3-3 a_1^2 a_2^2)= 
a_0^2 D,
$$  
with $D$ of degree $4$ and weight $6$, thus an $\SL(2,\CC)$-invariant (Proposition~\ref{cova}), the {\it discriminant}.   The algorithm stops after this point  and   $S_{3}$ is generated by the elements $a_0,c_2,c_3,D$ modulo the relation $a_0^2D-8c_2^3 - 9c_3^2$:
$$
S_{3}=\CC[a_{0},c_{2},c_{3},D],\quad a_0^2D-8c_2^3 - 9c_3^2=0.
$$
\subsubsection{The case \texorpdfstring{$n=4$}{n=4}}
 $$
2 c_4+ c_2^2= a_0^2( 2 a_0  a_4-2 a_1 a_3+  a_2^2):=a_0^2 B,
$$
with $B$ of degree $2$ and weight $4$, hence an $\SL(2,\CC)$-invariant.
$$
6 c_2 B -  D = -a_0 C \text{ \ with \ }
C := 2   a_2^3 -  6   a_1 a_2 a_3 + 9 a_0  a_3^2 + 6  a_1^2 a_4 - 12 a_0  a_2 a_4,
$$
where $C$ has degree $3$ and weight $6$, hence is an $\SL(2,\CC)$-invariant.   Again, the algorithm stops here, the algebra $S_4$ is generated by the $U$-invariants $a_0,c_2,c_3,B,C$ modulo the relation  
$6 a_0^{2}  c_2 B +  a_0^3 C  - 8 c_2^3 - 9 c_3^2$, and the subalgebra of $\SL(2,\CC)$-invariants is generated by $B$ and $C$.
$$
S_{4} =   \CC[a_{0},c_{2},c_{3},B,C], \quad 6 a_0^{2}  c_2 B + a_0^3 C  - 8 c_2^3 - 9 c_3^2=0.
$$
\begin{remark}\label{noperp}
The computations above show that the indecomposable $U$-invariant $D \in S_{3}$ becomes decomposable in $S_{4}$.
\end{remark}

A modern approach to computations of invariants and covariants for binary forms can be found in the thesis of \name{Mihaela Popoviciu Draisma} \cite{Po2013Invariants-of-bina}.  There one can find also references to classical computations.
\bigskip

\renewcommand{\MR}[1]{}
\bibliography{KP-Bib}
 \bibliographystyle{amsalpha}

\end{document}